\theoremstyle{plain}
\newtheorem{thm}{Theorem}[section]
\newtheorem{lem}[thm]{Lemma}
\newtheorem{pro}[thm]{Proposition}
\theoremstyle{definition}
\newtheorem{defn}[thm]{Definition}
\newtheorem{ex}[thm]{Example}
\newtheorem{rem}[thm]{Remark}
\numberwithin{equation}{section}
\newcommand{\R}{\mathbb{R}}
\newcommand{\N}{\mathbb{N}}
\newcommand{\de}{\partial}
\newcommand{\LL}{\mathcal{L}}
\renewcommand{\AA}{\mathcal{A}}
\newcommand{\OO}{\mathcal{O}}
\newcommand{\clOO}{\overline{\mathcal{O}}}
\newcommand{\bldu}{\mathbf{u}}
\newcommand{\bldw}{\mathbf{w}}
\newcommand{\bldz}{\mathbf{z}}
\newcommand{\GG}{\mathcal{G}}
\renewcommand{\d}{\mathrm{d}}
\begin{document}

\title[Positive solutions for nonlocal elliptic systems]{Nonzero positive solutions of elliptic systems with gradient dependence and functional BCs}  

\date{\today}

\author[S. Biagi]{Stefano Biagi}
\address{Stefano Biagi, Dipartimento di Dipartimento di Ingegneria Industriale e 
Scienze Ma\-te\-ma\-ti\-che,
Universit\`a Politecnica delle Marche, Via Brecce Bianche, 60131 Ancona, Italy}%
\email{biagi@dipmat.univpm.it}%

\author[A. Calamai]{Alessandro Calamai}
\address{Alessandro Calamai, Dipartimento di Ingegneria Civile, Edile e Architettura,
Universit\`a Politecnica delle Marche, Via Brecce Bianche, 60131 Ancona, Italy}%
\email{calamai@dipmat.univpm.it}%

\author[G. Infante]{Gennaro Infante}
\address{Gennaro Infante, Dipartimento di Matematica e Informatica, Universit\`{a} della
Calabria, 87036 Arcavacata di Rende, Cosenza, Italy}%
\email{gennaro.infante@unical.it}%

\begin{abstract} 
We discuss, by topological methods, the solvability of systems of second-order 
elliptic differential equations subject to functional boundary conditions under the presence of gradient terms in the nonlinearities.
We prove the existence of non-negative solutions and provide a non-existence result. We present some examples to illustrate the applicability of the existence and non-existence results.
\end{abstract}

\subjclass[2010]{Primary 35J47, secondary 35B09, 35J57, 35J60, 47H10}

\keywords{Positive solution, elliptic system, gradient terms,
functional boundary condition, cone, fixed point index}

\maketitle

\section{Introduction}

In this paper we study the solvability of a system of second-order 
elliptic differential equations subject to functional boundary conditions (BCs for short).
Namely, we investigate parametric systems of the type
\begin{equation}
  \label{nellbvp-introduction}
 \left\{
\begin{array}{lll}
 \LL_k u_k=\lambda_k\,f_k(x,u_1,\ldots,u_m, \nabla u_1,\ldots,\nabla u_m)
 &  \text{in $\OO$} & \qquad (k=1,2,\ldots,m), \\[0.15cm]
 u_k(x)=\eta_k\,\zeta_k(x)\,h_{k}[u_1,\ldots,u_m] & \text{for $x\in\de\OO$}
 & \qquad (k=1,2,\ldots,m),
\end{array}
\right.
\end{equation}
where $m\geq 1$ is a fixed natural number, 
$\OO\subseteq\R^n$ is a bounded and connected open set of class $C^{1,\alpha}$ for some
 $\alpha\in(0,1)$, and $ \lambda_k,\,\eta_k$, $k = 1,\ldots, m$, are non-negative real parameters. Moreover
$\LL_1,\ldots,\LL_m$ are uniformly elliptic, second-order linear partial differential operators
(PDOs) in divergence form on $\OO$. 
That is, for $k = 1,\ldots, m$,
  \begin{equation*}
   \begin{split}
  \LL_k u & := 
   -\sum_{i,j = 1}^n
  \de_{x_i}\Big(a^{(k)}_{i,j}(x)\de_{x_j}u + b^{(k)}_i(x)u\Big)+
  \sum_{i = 1}^n c^{(k)}_i(x)\de_{x_i}u + d^{(k)}(x)u
  \end{split}
  \end{equation*}
where
\begin{itemize}
  \item
  the coefficient functions of $\LL_k$ belong to
  $C^{1,\alpha}(\clOO,\R)$;
  \item the matrix $A^{(k)}(x) := \big(a_{i,j}^{(k)}(x)\big)_{i,j}$
  is symmetric for every $x\in\OO$;
  \item $\LL_k$ is uniformly elliptic
  in $\OO$, i.e., there exists $\Lambda_k > 0$ such that
  $$\frac{1}{\Lambda_k}\|\xi\|^2\leq \sum_{i,j = 1}^na^{(k)}_{i,j}(x)\xi_i\xi_j\leq 
  \Lambda_k\|\xi\|^2\quad \text{for any $x\in\OO$ and $\xi\in\R^n\setminus\{0\}$}$$
  where $\|\xi\|$ stands for the Euclidean norm of $\xi\in\R^n$;
  \item for every non-negative function $\varphi\in C_0^\infty(\OO,\R)$ one has
  $$\int_{\mathcal{O}}
  \Big(d^{(k)}\varphi+{\textstyle\sum_{i = 1}^n}b^{(k)}_i\de_{x_i}\varphi\Big)\,\d x,\quad
  \int_{\mathcal{O}}
  \Big(d^{(k)}\varphi+{\textstyle\sum_{i = 1}^n}c_i^{(k)}\de_{x_i}\varphi\Big)\,\d x\geq 0.$$
 \end{itemize}

Furthermore, for every fixed $k = 1,\ldots, m$ we also assume that
\begin{itemize}
  \item $f_k$ is a real-valued continuous function defined on $\clOO\times\R^m\times\R^{nm}$;
  \item $h_k$ is a real-valued continuous functional defined on the space $C^1(\clOO,\R^m)$;
  \item $\zeta_k\in C^{1,\alpha}(\clOO,\R)$ and $\zeta_k\geq 0$ on $\OO$.
\end{itemize}

The system~\eqref{nellbvp-introduction} is quite general, and includes, for example, as a particular case
a Dirichlet boundary value problem for elliptic systems with gradient dependence of the form
  \begin{equation}~\label{laplace-intro}
 \left\{
\begin{array}{lll}
 -\Delta u_1=\lambda_1\,f_1(x,u_1,u_2, \nabla u_1,\nabla u_2) &  \text{in $\OO$} & \\
 -\Delta u_2=\lambda_2\,f_2(x,u_1,u_2, \nabla u_1,\nabla u_2) &  \text{in $\OO$} & \\
 u_1\big|_{\de \OO} = 0= u_2\big|_{\de \OO}
\end{array}
\right.
\end{equation}
Systems of nonlinear PDEs of this kind are widely studied in view of applications: in fact, the nonlinearities in \eqref{laplace-intro} may depend also on the gradient of the solution, and thus represent convection terms. These problems, in general, are not easily dealt with by means of variational methods.
Different approaches in the study of PDEs with gradient terms have been proposed: for example sub- and super-solutions, topological degree theory, mountain pass techniques.
We mention, for instance, the pioneering works of Amann and Crandall \cite{AmCra}, Br\'ezis and Turner \cite{BT},
Mawhin and Schmitt \cite{MawSchm, MawSchm-c}, Pokhozhaev \cite{Po}
and the more recent contributions \cite{Alves-Moussaoui, Radu-book,FGM, FY, GM, RuSu, WaDe, Yan}. See also the very recent survey \cite{Fig18} and references therein.

In this paper we adopt a topological approach, based on the classical notion of  fixed point index (see e.g.\ \cite{guolak}) for the existence result, Theorem \ref{thmsol} below, whereas we prove a non-existence result
via an elementary argument.
In some sense we follow a path established by Amman \cite{Amann-rev,AmCra} and successfully used by many authors in different contexts.
We point out that our approach applies not only to Dirichlet BCs but permits to consider (possibly nonlinear) functional  BCs, including the special cases of \emph{linear} (\emph{multi-point} or \emph{integral}) BCs of the form
  \begin{equation} \label{multipoint-intro}
h_{k}[u]=\sum_{j=1}^{m} \sum_{i=1}^{N} \left( \hat{\alpha}_{ijk}u_j(\omega_i)
+\sum_{l=1}^{n} \hat{\beta}_{ijkl}\de_{x_l} u_j(\tau_i) \right)
\end{equation}
or
\begin{equation} \label{integral-intro}
h_{k}[u]=\sum_{j=1}^{m}
\left( \int_{\Omega}\hat{\alpha}_{jk}(x)u_j(x)\,dx + \sum_{l=1}^{n}\int_{\Omega}\hat{\beta}_{jkl}(x)\de_{x_l} u_j(x)\,dx
\right)
\end{equation}
here, in \eqref{multipoint-intro}, $\hat{\alpha}_{ijk}, \hat{\beta}_{ijkl}$ are non-negative coefficients and $\omega_i, \tau_i\in \OO$ while, in \eqref{integral-intro}, $\hat{\alpha}_{jk}, \hat{\beta}_{jkl}$ are non-negative continuous functions on $\overline{\OO}$.
In particular we observe that nonlinear, nonlocal BCs have seen recently attention in the framework of elliptic equations: 
we refer the reader to the papers~\cite{genupa, genupa2, Goodrich3, Goodrich4, gi-tmna, gi-jepe, Pao-Wang} and references therein.

We wish to point out that an advantage of our setting, with respect to the theory developed in~\cite{genupa, genupa2, Goodrich3, Goodrich4, gi-tmna, Pao-Wang}, is the possibility to allow also gradient dependence within the functionals occurring in the BCs. This follows the approach used recently in~\cite{gi-nieto, gi-ho} within the setting of ODEs. 

Note that functional BCs that involve gradient terms may occur in applications. 
For example, consider a particular case of~\eqref{nellbvp-introduction} for $m=1$ and $n=2$, namely
\begin{equation} \label{gradient-intro}
 \left\{
\begin{array}{lll}
 -\Delta u(x)=f (x,u(x), \nabla u(x)), &  \text{$x \in B$} & \\
 u(x) = \eta_0 u(0) + \eta_1 \|\nabla u(0)\|, &  \text{$x \in \de B$} &
\end{array}
\right.
\end{equation}
 where $B$ is the Euclidean ball in $\R^2$ centered at $0$ with radius $1$, $\|\cdot\|$ is the Euclidian norm and  $\eta_i$ are non-negative coefficients. The BVP~\eqref{gradient-intro} can be used as a model for the steady states of the temperature of a heated disk of radius 1, where a controller located in the border of the disk adds or removes heat
according to the value of the temperature and to its variation, both registered by a sensor located in the center of the disk.
In the context of ODEs, a good reference for this kind of thermostat problems is the recent paper \cite{Webb}.

As already pointed out, a peculiarity of system \eqref{nellbvp-introduction} is the dependence on the gradient of the solutions, both in the nonlinearity and in the functionals occurring in the BCs,
and this represents the main technical difficulty that we have to deal with in this paper.
For this purpose, we have to perform a preliminary study of the Green's function 
of the partial differential operators which occur in \eqref{nellbvp-introduction}.
In Section \ref{sec.preliminaries} we collect some properties and estimates on Green's function, which are probably known to the experts in the field, nevertheless we include them for the sake of completeness.
Roughly speaking, these estimates yield the a priori bounds needed to compute the fixed point index in suitable cones of non-negative functions.

Section \ref{sec.existenceenon} contains our main results, while the final Section \ref{sec.examples} includes
some examples illustrating our results. In particular, we fix $m=2$ and $n=3$, and,
 taking into account the parameters $\lambda_1, \lambda_2, \eta_1,\eta_2$,
we provide existence and non-existence results
in some concrete situations.

\section{Preliminaries on divergence-form elliptic operators} \label{sec.preliminaries}
 In this Section we present, mostly without proof, several results
 concerning divergence-form operators which shall
 play a central r\^ole in the forthcoming sections.
 We refer the reader
 to, e.g., \cite{Evans, GT} for a detailed treatment of this topic. \medskip
 
 To being with, let $\mathcal{O}\subseteq\R^n$ be a fixed open set
 and let $\LL$ be a second-order linear PDO
 on $\mathcal{O}$ of the following divergence form:
 \begin{equation} \label{eq.generalformLL}
  \begin{split}
  \LL u & := 
   -\sum_{i,j = 1}^n
  \de_{x_i}\Big(a_{i,j}(x)\de_{x_j}u + b_i(x)u\Big)+
  \sum_{i = 1}^n c_i(x)\de_{x_i}u + d(x)u \\
  & = -\mathrm{div}\Big(A(x)\nabla u
  + \mathbf{b}u\Big) + \langle \mathbf{c},\nabla u\rangle + du
  \end{split}
 \end{equation}
 (here, $\mathbf{b} = (b_1,\ldots,b_n)$ and $\mathbf{c} = (c_1,\ldots,c_n)$).
 Throughout the sequel, we shall suppose
 that the following \textquotedblleft structural assumptions''
 on $\mathcal{O}$ and $\LL$ are satisfied:
 \begin{itemize}
  \item[(H0)] $\mathcal{O}$ is bounded, connected and of class $C^{1,\alpha}$ for some
  $\alpha\in (0,1)$;
  \item[(H1)] the coefficient functions of $\LL$ are H\"older-continuous
  of exponent $\alpha$
  up to $\de\mathcal{O}$, i.e.,
  \begin{equation*} 
   a_{i,j},\,b_i,\,c_i,\,d\in C^{\alpha}(\overline{\mathcal{O}},\R) \qquad\text{for
   every $i,j\in\{1,\ldots,n\}$};
  \end{equation*}
  \item[(H2)] the matrix $A(x) := \big(a_{i,j}(x)\big)_{i,j}$ is symmetric in $\mathcal{O}$, i.e.,
  $$a_{i,j}(x) = a_{j,i}(x) \qquad \text{for every $x\in\overline{\mathcal{O}}$
  and every $i,j\in\{1,\ldots,n\}$};$$
  \item[(H3)] $\LL$ is uniformly elliptic in $\mathcal{O}$, i.e., there exists
  $\Lambda > 0$ such that
  $$\frac{1}{\Lambda}\|\xi\|^2\leq \sum_{i,j = 1}^na_{i,j}(x)\xi_i\xi_j \leq \Lambda\|\xi\|^2\qquad
  \text{for any $x\in\overline{\mathcal{O}}$ and any $\xi\in\R^n$};$$
  \item[(H4)] the inequalities
   $d-\mathrm{div}(\mathbf{b})\geq 0$ and $d-\mathrm{div}(\mathbf{c}) \geq 0$ 
  hold in the weak sense of
  di\-stri\-bu\-tions on $\mathcal{O}$, i.e., for every $\varphi\in C_0^\infty(\mathcal{O},\R)$
  such that $\varphi\geq 0$ on $\mathcal{O}$, one has
  $$\int_{\mathcal{O}}
  \Big(d\varphi+{\textstyle\sum_{i = 1}^n}b_i\de_{x_i}\varphi\Big)\,\d x
  \geq 0\quad\text{and}\quad
  \int_{\mathcal{O}}
  \Big(d\varphi+{\textstyle\sum_{i = 1}^n}c_i\de_{x_i}\varphi\Big)\,\d x\geq 0.$$
 \end{itemize}
 It should be noticed that, since the coefficient functions of $\LL$ are assumed to be 
 just H\"older-continuous on $\overline{\mathcal{O}}$, it is not possible
 to compute $\LL u$ in a point-wise sense (even if $u$ is smooth on $\mathcal{O}$); for this reason,
 the following definition is plainly justified.
 \begin{defn} \label{def.weaksolLLuf}
  Let the assumptions (H0)-to-(H4) be in force, and let
  $f\in L^2(\mathcal{O})$.
  We say that a function
  $u:\mathcal{O}\to\R$ is a \emph{solution of the equation}
 \begin{equation} \label{eq.equationLuf}
  \LL u = f \quad \text{in $\OO$},
 \end{equation}
 if $u\in W^{1,2}(\mathcal{O})$ and if, for every test function
 $\phi\in C_0^\infty(\mathcal{O},\R)$, one has
  $$\int_\mathcal{O}
  \Big(\langle A(x)\nabla u + \mathbf{b}u,\nabla \phi\rangle +
  \langle \mathbf{c}, \nabla u\rangle \phi + du\phi\Big)\d x =
  \int_{\mathcal{O}}f\phi\,\d x.$$
  Given $g\in W^{1,2}(\mathcal{O})$, we say that $u$ is a 
  \emph{solution of the Poisson problem}
  \begin{equation} \label{eq.PossonL}
   \begin{cases}
    \LL u = f & \text{in $\OO$}, \\
    u\big|_{\de\OO} = g,
   \end{cases}
  \end{equation}
  if $u$ is a solution of \eqref{eq.equationLuf}
  and, furthermore, $u-g\in W_0^{1,2}(\mathcal{O})$.
 \end{defn}
 Now, as a consequence
 of the \textquotedblleft sign assumption'' (H4) it is possible to prove that
 a suitable form of the Weak Maximum Principle holds for $\LL$
 (see, e.g., \cite[Theorem 8.1]{GT}); from this, one can
 straight\-for\-war\-dly deduce
  Lemma \ref{lem.uniquePoisson} below (see \cite[Corollary 8.2]{GT}), ensuring 
  that the Poisson problem
 \eqref{eq.PossonL} possesses at most one solution.
 \begin{lem} \label{lem.uniquePoisson}
  Let the assumptions 
  \emph{(H0)}-to-\emph{(H4)} be in force, and let $u\in W_0^{1,2}(\mathcal{O})$
  be such that $\LL u = 0$ or $\LL^T u = 0$ in $\mathcal{O}$. 
  Then $u \equiv 0$ almost everywhere 
  on $\mathcal{O}$.
 \end{lem}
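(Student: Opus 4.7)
The plan is to reduce the lemma to the Weak Maximum Principle for divergence-form operators satisfying the structural hypotheses (H0)--(H4), which is the tool cited just before the statement (Theorem 8.1 of \cite{GT}). The whole point of assumption (H4) is precisely to guarantee that this maximum principle is available in the weak $W^{1,2}$ setting, and from it the uniqueness assertion is essentially one line. So I do not expect a substantial obstacle; the work is just a careful application together with a check that the formal transpose $\LL^T$ lies in the same class of operators.

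First I would treat the case $\LL u = 0$. Since $u\in W_0^{1,2}(\OO)$, its trace on $\de\OO$ vanishes, so in particular $u^+\big|_{\de\OO} = 0$ in the sense of traces. Applying the Weak Maximum Principle to the $W^{1,2}$-subsolution $u$ (we have $\LL u = 0 \leq 0$ in the weak sense), one obtains $\sup_{\OO} u \leq \sup_{\de\OO} u^+ = 0$. Replacing $u$ by $-u$, which still satisfies $\LL(-u) = 0$ and still has vanishing trace, the same principle yields $\sup_{\OO}(-u)\leq 0$, i.e.\ $\inf_{\OO} u \geq 0$. Combining the two inequalities gives $u \equiv 0$ a.e.\ in $\OO$.

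For the case $\LL^T u = 0$, I would observe that the formal transpose of $\LL$ has the same divergence form \eqref{eq.generalformLL} but with the drift vectors $\mathbf{b}$ and $\mathbf{c}$ interchanged; indeed, since $A$ is symmetric (H2), the principal part is unchanged under transposition. In particular:
\begin{itemize}
\item (H0) depends only on $\OO$, hence is preserved;
\item (H1)--(H3) involve only the (symmetric) matrix $A$ and the regularity/ellipticity data, hence are preserved;
\item (H4) is manifestly symmetric in the roles of $\mathbf{b}$ and $\mathbf{c}$, hence is preserved when the two are swapped.
\end{itemize}
Consequently $\LL^T$ again satisfies (H0)--(H4), and the argument of the previous paragraph applies verbatim with $\LL$ replaced by $\LL^T$, giving $u\equiv 0$ a.e.

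The only delicate point — and what I would flag as the \emph{main} (mild) obstacle — is making sure that the Weak Maximum Principle of \cite[Theorem 8.1]{GT} is being quoted in a form that accommodates the full lower-order structure of $\LL$ under the distributional sign condition (H4), rather than the more classical assumption $d\geq 0$ with $\mathbf{b}\equiv 0$. Once that is verified (it is the content of that theorem in \cite{GT}), the deduction above is immediate, and the lemma follows.
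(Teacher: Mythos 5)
Your argument is correct and is essentially the paper's own route: the paper proves this lemma simply by invoking the Weak Maximum Principle of \cite[Theorem 8.1]{GT} (whose applicability is exactly what (H4) guarantees) together with \cite[Corollary 8.2]{GT}, and it records the observation that $\LL^T$ satisfies (H1)--(H4) by the symmetry of (H4) later, in the proof of Theorem \ref{thm.mainexistenceGreen}. You have merely written out the two-sided application of the maximum principle ($u$ and $-u$) that the cited corollary encapsulates, so there is nothing substantively different to compare.
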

 
 \subsection{The Poisson problem for $\LL$} \label{subsec:Poisson}
  A first group of results we
  aim to present is about existence and regularity of solutions
  for the Poisson problem \eqref{eq.PossonL} for $\LL$.
   In order to do this, we first introduce the following Banach spaces:
  \begin{itemize}
   \item $X = (C(\clOO,\R), \|\cdot\|_\infty)$, where 
   \begin{equation} \label{eq.defnormainfty}
   \|f\|_\infty 
   := \max_{x\in\clOO}|f(x)|;
   \end{equation}
   \item $X = (C^1(\clOO,\R), \|\cdot\|_{C^1(\clOO,\R)})$, where
   \begin{equation} \label{eq.defnormaC1scalar}
    \|f\|_{C^1(\clOO,\R)} := \max_{j = 1,\ldots,n}
   \big\{\|f\|_\infty, \|\de_jf\|_\infty:\,j = 1,\ldots,n\big\};
   \end{equation}
   \item $X = C^{1,\theta}(\clOO,\R)$ (for some
  $\theta\in(0,1)$), where
   \begin{equation} \label{eq.defnormaC1thetascalar}
    \|u\|_{C^{1,\theta}(\clOO,\R)} := \max_{j = 1,\ldots,n}\Big\{\|u\|_\infty,\,
  \|\de_{j}u\|_\infty,\,\sup_{x,y\in\clOO}\frac{|\de_{j}u(x)-\de_j u(y)|}{\|x-y\|^\theta}
  \Big\}
  \end{equation}
  \end{itemize}
Given $f\in C^1(\clOO,\R)$, it will be also convenient to define, with abuse of notation,
\begin{equation} \label{eq.defnormagrad}
    \|\nabla f\|_{\infty} := \max_{j = 1,\ldots,n}
   \big\{\|\de_jf\|_\infty:\,j = 1,\ldots,n\big\},
   \end{equation}
so that, clearly, $\|f\|_{C^1(\clOO,\R)} = \max \big\{\|f\|_\infty, \|\nabla f\|_\infty\big\}$.
  \medskip  

  Now, by exploiting assumptions (H3)-(H4),
  Lemma \ref{lem.uniquePoisson}
  and the Fredholm al\-ter\-na\-ti\-ve, one can establish
  the following basic theorem
  (for a proof, see \cite[Theorem 8.3]{GT}).
  \begin{thm} \label{thm.existenceFirst}
   Let the assumptions
   \emph{(H0)}-to-\emph{(H4)} be in force.  
   Then, for every $f\in L^2(\mathcal{O})$ and every $g\in W^{1,2}(\mathcal{O})$
   there exists a unique solution $u_{f,\,g}\in W^{1,2}(\mathcal{O})$
   of \eqref{eq.PossonL}.
  \end{thm}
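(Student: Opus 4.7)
\medskip

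\textbf{Proof plan.} The strategy is the classical one: reduce to homogeneous boundary data, solve an auxiliary coercive variational problem by Lax--Milgram, and finish via the Fredholm alternative, invoking Lemma \ref{lem.uniquePoisson} at the end to eliminate the nontrivial kernels that could \emph{a priori} spoil solvability.

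The first step is to reduce to the case $g\equiv 0$. Since the coefficients of $\LL$ are bounded on $\clOO$ by (H1), the operator $\LL g$ defines an element of $W^{-1,2}(\OO)$ for every $g\in W^{1,2}(\OO)$, and writing $u=v+g$ transforms \eqref{eq.PossonL} into the problem of finding $v\in W_0^{1,2}(\OO)$ with $\LL v=f-\LL g$ in the distributional sense. Hence I may assume $g\equiv 0$ and search for $v\in W_0^{1,2}(\OO)$ solving $\LL v=F$ for an arbitrary $F\in W^{-1,2}(\OO)$.

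Next, I would introduce the bilinear form associated with $\LL$,
\begin{equation*}
B[v,\phi] := \int_{\OO}\Big(\langle A(x)\nabla v+\mathbf{b}v,\nabla\phi\rangle + \langle\mathbf{c},\nabla v\rangle\phi + dv\phi\Big)\,\d x, \qquad v,\phi\in W_0^{1,2}(\OO),
\end{equation*}
which is continuous on $W_0^{1,2}(\OO)\times W_0^{1,2}(\OO)$ thanks to the boundedness of the coefficients. Combining uniform ellipticity (H3) with the Cauchy--Schwarz and Young inequalities to absorb the lower-order terms, I would derive a G{\aa}rding-type estimate: there exist constants $c_0>0$ and $\sigma_0\geq 0$ such that
\begin{equation*}
B[v,v]+\sigma_0\,\|v\|_{L^2(\OO)}^2\geq c_0\,\|v\|_{W^{1,2}(\OO)}^2 \qquad \text{for every } v\in W_0^{1,2}(\OO).
\end{equation*}
Fixing any $\sigma\geq\sigma_0$, the Lax--Milgram theorem then provides a bounded inverse $T_\sigma := (\LL+\sigma I)^{-1}\colon W^{-1,2}(\OO)\to W_0^{1,2}(\OO)$.

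To conclude, I would invoke the Rellich--Kondrachov theorem (applicable because $\OO$ is bounded and regular enough by (H0)) to deduce that $T_\sigma$, viewed as an operator from $L^2(\OO)$ to itself, is compact. Rewriting the equation $\LL v=F$ as the equivalent problem $v-\sigma T_\sigma v=T_\sigma F$ on $L^2(\OO)$, the Fredholm alternative produces the dichotomy: either $\LL\colon W_0^{1,2}(\OO)\to W^{-1,2}(\OO)$ is a bijection, or there exists $0\neq v_0\in W_0^{1,2}(\OO)$ with $\LL v_0=0$; moreover, existence for an arbitrary right-hand side additionally requires the triviality of the kernel of the formal transpose $\LL^T$, which is why the sign condition (H4) is imposed symmetrically on $\mathbf{b}$ and $\mathbf{c}$. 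Both possible kernels are then ruled out by Lemma \ref{lem.uniquePoisson}, yielding existence and uniqueness. I expect the main technical point to be the derivation of the G{\aa}rding estimate in a form that uses only the boundedness of $\mathbf{b}$ (rather than its derivatives), which is achieved by keeping the term $\mathbf{b}v$ paired with $\nabla\phi$ exactly as in Definition \ref{def.weaksolLLuf} and never integrating by parts against it.
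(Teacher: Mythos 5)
Your proposal is correct and follows essentially the same route as the paper, which simply invokes \cite[Theorem 8.3]{GT}: that proof is precisely your reduction to $g\equiv 0$, the G{\aa}rding estimate plus Lax--Milgram for $\LL+\sigma$, compactness via Rellich--Kondrachov, the Fredholm alternative, and the elimination of the kernel by the weak maximum principle (Lemma \ref{lem.uniquePoisson}). The only small inaccuracy is your claim that surjectivity additionally requires triviality of the kernel of $\LL^T$: for a compact perturbation of the identity, injectivity of $I-\sigma T_\sigma$ already yields surjectivity, and in any case Lemma \ref{lem.uniquePoisson} covers both kernels, so nothing is lost.
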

  Throughout the sequel, we indicate by $u_{f,\,g}$ the unique solution
  in $W^{1,2}(\mathcal{O})$ of \eqref{eq.PossonL}
  (for fixed $f\in L^2(\mathcal{O})$ and $g\in W^{1,2}(\mathcal{O})$),
  whose existence is guaranteed
  by Theorem \ref{thm.existenceFirst}.
  In the particular case when $g \equiv 0$, we simply write $u_f$
  instead of $u_{f,\,0}$.
  \begin{rem} \label{rem.hypothesisLemmaExistence}
   Theorem \ref{thm.existenceFirst}
   holds under
   more general hypotheses: in fact, it suffices to assume that 
   {$\mathcal{O}$} is bounded and that the coefficient functions of $\LL$ are
   in $L^\infty({\mathcal{O}})$.
  \end{rem}
  \begin{rem} \label{rem.linearKKA}
  Let $f_1,f_2\in L^2({\mathcal{O}})$ and, for $i = 1,2$, let
  $u_i = u_{f_i}\in W_0^{1,2}({\mathcal{O}})$ be the unique solution
  of \eqref{eq.PossonL} with $f = f_i$ (and $g\equiv 0$). Since, obviously, it holds that
  $$\LL (u_{f_1}+u_{f_2}) = u_{f_1}+u_{f_2}\quad\text{and}\quad
  u_{f_1}+u_{f_2}\in W_0^{1,2}({\mathcal{O}}),$$
  we conclude that the unique solution of \eqref{eq.PossonL} with $f = f_1+f_2$
  and $g\equiv 0$
  is $u_{f_1}+u_{f_2}$.
 \end{rem}
  Since we aim to apply suitable fixed-point techniques
  to operators acting on spaces of $C^1$-functions, we are interested
  in solving \eqref{eq.PossonL} for {continuous} $f$
  and regular $g$. In this context, the unique solution
  $u_{f,\,g}$ of \eqref{eq.PossonL} turns out to be much more regular that $W^{1,2}$;
  in fact, we have the following crucial 
  result (for a proof, see
   \cite[Thm.s 8.16, 8.33 and 8.34]{GT}).
  \begin{thm} \label{thm.regulConegamma}
  Let the assumptions \emph{(H0)}-to-\emph{(H4)}, and let
  $\LL$ be as in \eqref{eq.generalformLL}. Moreover,
   let $f\in C(\overline{\OO},\R)$ and let $g\in C^{1,\alpha}(\overline{\OO},\R)$.
  Then the following facts hold true.
  \begin{itemize}
   \item[{(i)}] There exists a unique $\hat{u}_{f,\,g}\in C^{1,\alpha}(\overline{\OO},\R)$
   such that 
   $$\text{$\hat{u}_{f,\,g} \equiv u_{f,\,g}$ a.e.\,on $\OO$}.$$ 
   In particular, $\hat{u}_{f,\,g}$ 
   solves \eqref{eq.equationLuf} and $\hat{u}_{f,\,g}\equiv g$ point-wise on $\de\Omega$.
   
   \item[(ii)] There exists a constant $C > 0$, 
   only depending on $n,\,\Lambda$ and $\mathcal{O}$, such that
   \begin{equation} \label{eq.estimuff}
    \|\hat{u}_{f,\,g}\|_{C^{1,\alpha}(\overline{\OO},\R)}
    \leq C\,\Big(\|f\|_{C(\overline{\OO},\R)}+\|g\|_{C^{1,\alpha}(\overline{\OO},\R)}\Big).
   \end{equation}
   
   \item[(iii)] If $f\geq 0$ on $\overline{\OO}$ and
   $g\geq 0$ on $\de\OO$, then $\hat{u}_{f,\,g}\geq 0$
   on $\overline{\OO}$.
  \end{itemize}
 \end{thm}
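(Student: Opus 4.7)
The plan is to produce $\hat u_{f,g}$ as the continuous representative of the weak solution $u_{f,g}\in W^{1,2}(\OO)$ furnished by Theorem~\ref{thm.existenceFirst}, and then upgrade the regularity to $C^{1,\alpha}$ up to the boundary by quoting standard elliptic regularity. All three claims will be obtained by combining interior and boundary Schauder estimates for divergence-form operators with the weak maximum principle that follows from the sign condition (H4).

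For part (i), I would first apply interior regularity for divergence-form operators with H\"older coefficients and continuous right-hand side (a De\;Giorgi--Nash--Moser / Schauder argument, cf.\ \cite[Thm.~8.24]{GT}) to deduce that $u_{f,g}$ admits a representative of class $C^{0,\beta}_{\loc}(\OO)$. The hypotheses (H0)--(H1), together with $g\in C^{1,\alpha}(\clOO)$, allow one to invoke the boundary H\"older estimate of \cite[Thm.~8.33]{GT}, giving a representative $\hat u_{f,g}\in C^{0,\alpha}(\clOO)$ with $\hat u_{f,g}\equiv g$ pointwise on $\de\OO$ and $\hat u_{f,g}=u_{f,g}$ a.e.\ on $\OO$. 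The Schauder-type boundary estimate of \cite[Thm.~8.34]{GT} then promotes $\hat u_{f,g}$ to $C^{1,\alpha}(\clOO)$, using crucially the H\"older regularity of the coefficients (H1), the $C^{1,\alpha}$ smoothness of $\partial\OO$ (H0), and $g\in C^{1,\alpha}(\clOO)$. Uniqueness of the continuous representative is immediate, since two continuous functions that agree a.e.\ must coincide.

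For part (ii), the same boundary Schauder estimate \cite[Thm.~8.34]{GT} delivers the a priori bound
\[
\|\hat u_{f,g}\|_{C^{1,\alpha}(\clOO)}\leq C_1\bigl(\|\hat u_{f,g}\|_{\infty}+\|f\|_{\infty}+\|g\|_{C^{1,\alpha}(\clOO)}\bigr),
\]
with $C_1>0$ depending only on $n$, $\Lambda$ and $\OO$. To dispose of the $L^\infty$-norm on the right-hand side, I would invoke the global $L^\infty$ bound for weak solutions of divergence-form equations \cite[Thm.~8.16]{GT}, which under (H4) yields $\|\hat u_{f,g}\|_\infty\leq C_2(\|f\|_\infty+\|g\|_\infty)$ with $C_2$ having the prescribed dependence. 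Combining the two inequalities and absorbing $\|g\|_\infty\leq\|g\|_{C^{1,\alpha}(\clOO)}$ produces the desired estimate \eqref{eq.estimuff}.

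For part (iii), I would apply the weak maximum principle for divergence-form operators satisfying (H4) \cite[Thm.~8.1]{GT} to the function $-\hat u_{f,g}$: since $\LL(-\hat u_{f,g})=-f\leq 0$ weakly in $\OO$, while $-\hat u_{f,g}=-g\leq 0$ on $\de\OO$ (trace sense), the principle forces $-\hat u_{f,g}\leq 0$, hence $\hat u_{f,g}\geq 0$ on $\clOO$. The main subtleties in the whole argument are therefore not conceptual but bookkeeping: one must verify that the form of (H4) used here is exactly the one required to make the weak maximum principle and the coercivity estimate in \cite[\S 8]{GT} available for both $\LL$ and $\LL^T$, and that the constants appearing in the Schauder estimates can be taken to depend only on $n$, $\Lambda$ and $\OO$ and not on the particular solution. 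Once this is checked, (i)--(iii) follow by direct citation.
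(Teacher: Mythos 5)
Your argument is correct and follows essentially the same route as the paper, which proves this theorem simply by citing the global boundedness, global $C^{1,\alpha}$ Schauder estimate and solvability results of Gilbarg--Trudinger (Theorems 8.16, 8.33, 8.34) together with the weak maximum principle under (H4); your reconstruction via interior/boundary regularity, the a priori $C^{1,\alpha}$ bound with the $L^\infty$-norm absorbed through Theorem 8.16, and the maximum principle for part (iii) is exactly this argument spelled out. The only caveat you already flag yourself: the constants in the Schauder estimates also depend on the H\"older norms of the coefficients of $\LL$, which the paper's statement (and yours) subsumes into the dependence on the fixed operator.
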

 Now, in view of Theorem \ref{thm.regulConegamma}-(i), we can define
 a linear operator as follows
 \begin{equation} \label{eq.defoperatorG}
  \GG_\LL: C(\overline{\OO},\R)\longrightarrow C^{1,\alpha}(\overline{\OO},\R), \qquad
  \GG_\LL(f) := \hat{u}_f, 
 \end{equation}
 where $\hat{u}_f = \hat{u}_{f,\,0}\in C^{1,\alpha}(\overline{\OO},\R)$
 is the unique solution of \eqref{eq.PossonL} with $g\equiv 0$. We shall call
 $\GG_\LL$ the \emph{Green operator for $\LL$}. 
 By exploiting assertions (ii)-(iii) of
  Theorem \ref{thm.regulConegamma},
  it is possible to deduce some continuous-compactness properties
  of $\GG_\LL$ which shall play a central r\^ole in the next
  sections; to be more precise, we have the following proposition. 
  \begin{pro} \label{prop.propertiesG}
   Let the assumptions \emph{(H0)}-to-\emph{(H4)} be in force, and let
   $\GG_\LL$ be the operator defined in \eqref{eq.defoperatorG}.
   Then the following facts hold:
   \begin{itemize}
    \item[(i)] $\GG_\LL$ is continuous
    from $C(\clOO,\R)$ to  $C^{1,\alpha}(\clOO,\R)$;
    \item[(ii)] $\GG_\LL$ is \emph{compact}
    from $C(\clOO,\R)$ to  $C^{1}(\clOO,\R)\supseteq
    C^{1,\alpha}(\clOO,\R)$;
    \item[(iii)] if $V_0:=C(\clOO,\R^+)
    \subseteq C(\clOO,\R)$ denotes the 
    \emph{(}convex\emph{)} cone of
    the non-negative con\-ti\-nuo\-us functions on $\clOO$, 
    it holds that $\GG_\LL(V_0)\subseteq V_0$.
   \end{itemize}
  \end{pro}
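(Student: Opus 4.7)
The plan is to deduce all three assertions as direct consequences of Theorem~\ref{thm.regulConegamma} applied with boundary datum $g\equiv 0$. The key quantitative input is the a priori estimate \eqref{eq.estimuff}, which in the present setting reduces to
\begin{equation*}
\|\GG_\LL(f)\|_{C^{1,\alpha}(\clOO,\R)} \;\leq\; C\,\|f\|_\infty \qquad \text{for every }f\in C(\clOO,\R),
\end{equation*}
with a constant $C>0$ independent of $f$.

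For assertion (i), I would first record that $\GG_\LL$ is in fact a \emph{linear} operator: additivity is essentially the content of Remark~\ref{rem.linearKKA}, and homogeneity with respect to scalars follows by the same argument, since if $\hat u_f$ solves $\LL u=f$ with zero boundary trace then $\lambda\hat u_f$ still belongs to $W_0^{1,2}(\OO)$ and solves $\LL u=\lambda f$, so the uniqueness part of Theorem~\ref{thm.existenceFirst} forces $\hat u_{\lambda f}=\lambda \hat u_f$. The continuity assertion then amounts to boundedness of $\GG_\LL$ between the two Banach spaces, and this is precisely the displayed estimate.

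For assertion (ii), the idea is to combine the same estimate with the compactness of the embedding $C^{1,\alpha}(\clOO,\R)\hookrightarrow C^{1}(\clOO,\R)$. The latter is a standard Ascoli--Arzel\`a argument: a bounded set in $C^{1,\alpha}(\clOO,\R)$ is uniformly bounded and, by virtue of a uniform bound on the $\alpha$-H\"older seminorms of the first-order partial derivatives, equicontinuous in the $C^1$-topology, hence precompact. Given a bounded sequence $\{f_k\}\subset C(\clOO,\R)$, the estimate makes $\{\GG_\LL(f_k)\}$ bounded in $C^{1,\alpha}$, hence precompact in $C^1$, which is exactly the compactness asserted. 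Assertion (iii) is an immediate translation of Theorem~\ref{thm.regulConegamma}(iii) in the case $g\equiv 0$: if $f\geq 0$ on $\clOO$ then $\GG_\LL(f)=\hat u_f$ is a continuous non-negative function, i.e.\ it lies in $V_0$. The only ingredient beyond Theorem~\ref{thm.regulConegamma} is the compact embedding used in (ii), and this is the only step where one has to leave the framework already built up in the previous pages; everything else is essentially a routine unpacking.
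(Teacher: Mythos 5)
Your proposal is correct and follows essentially the same route as the paper: all three assertions are derived from Theorem~\ref{thm.regulConegamma} with $g\equiv 0$, using linearity plus the estimate \eqref{eq.estimuff} for (i), and Theorem~\ref{thm.regulConegamma}-(iii) for (iii). Your packaging of (ii) as the compact embedding $C^{1,\alpha}(\clOO,\R)\hookrightarrow C^{1}(\clOO,\R)$ is just a restatement of the paper's direct Arzel\`a--Ascoli argument applied to the sequence $\GG_\LL(f_j)$ and its first derivatives, so the two proofs coincide in substance.
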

  \begin{proof}
  (i)\,\,On account of Theorem \ref{thm.regulConegamma}-(ii), 
  for every
  $f\in C(\clOO,\R)$ one has
  \begin{equation} \label{eq.continuityKA}
   \|\GG_\LL(f)\|_{C^{1,\alpha}(\clOO,\R)}
  \leq C\|f\|_{\infty}
  \end{equation}
  (here, $C > 0$ is a constant independent of $f$).
  Since $\GG_\LL$ is linear (see Remark \ref{rem.linearKKA}), 
  from \eqref{eq.continuityKA} we immediately deduce that
  $\GG_\LL$ is continuous
  from $C(\clOO,\R)$ to  $C^{1,\alpha}(\clOO,\R)$. \medskip
  
  (ii)\,\,Let $\{f_j\}_{j}$ be a bounded sequence in $C(\clOO,\R)$.
  On account of \eqref{eq.continuityKA}, we see that the sequence $\{\GG_\LL(f_j)\}_j$
  is bounded in $C^{1,\alpha}(\clOO,\R)$; as a consequence,
  a standard application of Arzelà-Ascoli's Theorem implies
  the existence of $u_0,\ldots,u_n\in C(\clOO,\R)$ such that \medskip
  
  (a)\,\,$\|\GG_\LL(f_{j_k})- u_0\|_{\infty}\to 0$ as $k\to\infty$, \medskip
  
  (b)\,\,$\|\de_{i}(\GG_\LL(f_{j_k}))- u_i\|_{\infty}\to 0$
  as $k\to\infty$ (for every $i = 1,\ldots,n$), \medskip
  
  \noindent where $\{f_{j_k}\}_{k}$ is a suitable sub-sequence of
  $\{f_{j}\}_j$. By combining (a) and (b), we deduce that
  $u_0\in C^1(\clOO,\R)$ and that $\nabla u_0 = (u_1,\ldots,u_n)$; moreover,
  one has
  $$\|\GG_\LL(f_{j_k})- u_0\|_{C^1(\clOO),\R)}\to 0\quad \text{as $k\to\infty$},$$ 
  and this proves that $\GG_\LL$ is compact from 
  $C(\clOO,\R)$ to  $C^{1}(\clOO,\R)$, as desired. \medskip
    
  (iii)\,\,Let $f\in V_0$ be fixed. Since, by Theorem \ref{thm.regulConegamma}-(iii),
  we know that $\GG_\LL(f) = \hat{u}_{f,\,0}\geq 0$ throug\-hout $\clOO$, we immediately conclude that
  $\GG_\LL(f)\in V_0\cap C^{1,\alpha}(\clOO,\R)$, as desired.
  \end{proof}
  \subsection{Green's function for $\LL$} \label{subsec:Greenfunction}
  Now we have established Proposition \ref{prop.propertiesG}, we turn to present
  a second group of results: this is about
  the existence of a Green's function for $\LL$ allowing to obtain an integral representation
  formula for $\GG_\LL$. \medskip
  
  To begin with, we demonstrate the 
  following key
  theorem.
  \begin{thm} \label{thm.mainexistenceGreen}
   Let the assumptions \emph{(H0)}-to-\emph{(H4)} be in force, and let
   $\LL$ be as in \eqref{eq.generalformLL}. There exists a
   function $g_\LL:\mathcal{O}\times\mathcal{O}\to[0,\infty)$ such that
   \begin{itemize}
    \item[(a)] $g_{\LL}(\cdot;x)\in L^1(\OO)$ for almost every $x\in\OO$;
    \item[(b)] for every $f\in C(\clOO,\R)$ one has
    \begin{equation} \label{eq.Greenfunction}
     \GG_\LL(f)(x) = \int_{\OO}g_\LL(y;x)f(y)\,\d y \qquad\text{for a.e.\,$x\in\OO$}.
    \end{equation}
   \end{itemize}
   Furthermore, $g_\LL$ enjoys the following properties:
    \begin{itemize}
     \item[(I)] there exists a constant $c_0 > 0$ such that, for a.e.\,$x,y\in\OO$, one has
     \begin{equation} \label{eq.mainestimG}
      0\leq g_\LL(y;x)\leq c_0\,\|x-y\|^{2-n};
     \end{equation}
     \item[(II)] $g_\LL(\cdot;x)\in
     W_0^{1,p}(\mathcal{O})$ for a.e.\,$x\in\OO$ 
    and every $1\leq p < n/(n-1)$;
    \item[(III)] $g_\LL(y;\cdot)\in
     W_0^{1,p}(\mathcal{O})$ for a.e.\,$y\in\OO$ 
    and every $1\leq p < n/(n-1)$;
     \item[(IV)] there exists a constant $c_1 > 0$ such that,
     for a.e.\,$x,y\in\OO$, one has
     \begin{equation} \label{eq.mainestimderG}
      \|\nabla_y g_\LL(y;x)\|\leq c_1\,\|x-y\|^{1-n}\quad\text{and}\quad
      \|\nabla_x g_\LL(y;x)\|\leq c_1\,\|x-y\|^{1-n}.
     \end{equation}
    \end{itemize}
    Finally, $g_\LL$ is unique in the following sense: if $\tilde{g}:\OO\times\OO\to[0,\infty)$
    is another function satisfying \emph{(a)-(b)}, then $g_\LL(\cdot;x) = \tilde{g}(\cdot;x)$
    in $L^1(\OO)$ for a.e.\,$x\in\OO$.
  \end{thm}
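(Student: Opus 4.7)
The plan is to construct $g_\LL$ by duality with the formal adjoint combined with a mollified-pole approximation. Since $A$ is symmetric and assumption (H4) is symmetric in the two drift fields $\mathbf{b}$ and $\mathbf{c}$, the formal adjoint
$$\LL^T u := -\mathrm{div}(A\nabla u + \mathbf{c}\,u) + \langle \mathbf{b},\nabla u\rangle + d\,u$$
also satisfies (H0)--(H4), so Proposition \ref{prop.propertiesG} yields a Green operator $\GG_{\LL^T}$ enjoying identical properties. An integration by parts on pairs of test functions yields the $L^2$-duality
$$\int_\OO \GG_\LL(f)\,\phi\,\d x=\int_\OO f\,\GG_{\LL^T}(\phi)\,\d x\qquad \forall\, f,\phi\in C(\clOO,\R).$$

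For each fixed $x\in\OO$ I would approximate the Dirac mass at the pole: pick a nonnegative mollifier $\varphi_\epsilon^x\in C_0^\infty(B_\epsilon(x),\R)$ with unit integral and set $G_\epsilon(\cdot;x):=\GG_{\LL^T}(\varphi_\epsilon^x)$. The duality identity and the continuity of $\GG_\LL(f)$ at $x$ give
$$\int_\OO G_\epsilon(y;x)\,f(y)\,\d y=\int_\OO \varphi_\epsilon^x(y)\,\GG_\LL(f)(y)\,\d y\;\xrightarrow[\epsilon\to 0^+]{}\;\GG_\LL(f)(x)$$
for every $f\in C(\clOO,\R)$. The crucial step is to establish the uniform Stampacchia--Littman--Weinberger bounds
$$0\leq G_\epsilon(y;x)\leq c_0\,\|y-x\|^{2-n},\qquad \|\nabla_y G_\epsilon(y;x)\|\leq c_1\,\|y-x\|^{1-n}$$
on $\OO\setminus B_{2\epsilon}(x)$, with constants independent of $\epsilon$; these rest on Moser iteration on dyadic annuli around the pole together with a Caccioppoli-type energy estimate, exploiting uniform ellipticity (H3) and the coercivity implicit in (H4).

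Granted the uniform bounds, $\{G_\epsilon(\cdot;x)\}_\epsilon$ is relatively compact in $L^1_\loc(\OO\setminus\{x\})$, and a diagonal argument over a countable dense set of poles yields a limit function $g_\LL:\OO\times\OO\to[0,\infty)$ satisfying (a), (b), (I) and (IV). Properties (II)-(III) follow by observing that $\int_\OO\|x-y\|^{(1-n)p}\,\d y<\infty$ precisely for $1\leq p<n/(n-1)$, combined with the vanishing trace on $\de\OO$ inherited from $G_\epsilon(\cdot;x)\in W_0^{1,2}(\OO)$; the symmetric role of the two variables---running the same construction with $\LL$ and $\LL^T$ exchanged---gives the $W_0^{1,p}$-membership in the second slot. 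Uniqueness is direct: if $\tilde g$ obeys (a)-(b), then $\int_\OO (g_\LL-\tilde g)(\cdot;x)\,f\,\d y=0$ for every $f\in C(\clOO,\R)$, forcing $g_\LL(\cdot;x)=\tilde g(\cdot;x)$ in $L^1(\OO)$ for a.e.\ $x$. The principal technical obstacle is precisely the derivation of the uniform pointwise bounds on $G_\epsilon$ and $\nabla G_\epsilon$ in the nonsymmetric setting with merely Hölder-continuous coefficients; once these estimates are secured, the construction, passage to the limit, uniqueness and integrability assertions are essentially routine adaptations of the classical Grüter--Widman arguments.
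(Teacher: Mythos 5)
Your overall strategy (duality with $\LL^T$, mollified poles $G_\epsilon(\cdot;x)=\GG_{\LL^T}(\varphi_\epsilon^x)$, uniform bounds, compactness, limit) is the classical Littman--Stampacchia--Weinberger/Gr\"uter--Widman construction and is viable in principle; it differs from the paper, which does not construct $g_\LL$ at all but assembles the statement from the recent results of Kim--Sakellaris (existence and the representation formula, the bound $0\leq g_\LL(y;x)\leq c_0\|x-y\|^{2-n}$, the Green's function $G$ of $\LL^T$, the symmetry relation $G(x;y)=g_\LL(y;x)$, and the gradient estimates), applied to both $\LL$ and $\LL^T$ thanks to the symmetric form of (H4). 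The problem is that the entire substance of the theorem --- properties (I) and especially (IV) in the presence of the drift terms $\mathbf{b},\mathbf{c}$ and the zeroth-order term $d$ under the weak sign condition (H4) --- is exactly what you do not prove: you label it ``the principal technical obstacle'' and defer it. Worse, the route you indicate for it would not work as described: Moser iteration on dyadic annuli plus a Caccioppoli inequality yields pointwise bounds on $G_\epsilon$ itself and $L^2$-averaged gradient bounds on annuli, but it cannot produce the pointwise estimate $\|\nabla_y G_\epsilon(y;x)\|\leq c_1\|x-y\|^{1-n}$. That estimate genuinely requires the H\"older continuity of the coefficients (H1) through local Schauder/$C^{1,\alpha}$ interior estimates (or the Dini-continuity arguments of Gr\"uter--Widman), together with a nontrivial treatment of the lower-order terms; handling these terms under (H4) alone is precisely the point of the Kim--Sakellaris results the paper invokes. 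So, as written, the proposal assumes the hardest part of the statement.

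Two smaller points. First, your uniqueness argument has a quantifier gap: for each fixed $f$ the identity $\int_\OO\big(g_\LL(y;x)-\tilde g(y;x)\big)f(y)\,\d y=0$ holds only for $x$ outside a null set depending on $f$, and there are uncountably many $f$; the paper fixes this by testing against a countable dense family of $C_0^\infty$ functions and taking the union of the exceptional sets. Second, for (II)--(III) the integrability observation $\int_\OO\|x-y\|^{(1-n)p}\,\d y<\infty$ for $p<n/(n-1)$ only helps once the pointwise bounds (I) and (IV) and the vanishing-trace property of the approximants are secured, so these items inherit the same gap. If you want to keep your constructive route, you must either prove the uniform pointwise function and gradient bounds for $G_\epsilon$ (using (H1)--(H4) and Schauder theory near the pole) or, as the paper does, quote a reference that already provides them for operators with lower-order coefficients.
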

  Throughout the sequel, we shall refer to the function
   $g_\LL$ in Theorem \ref{thm.mainexistenceGreen} as the
  \emph{Green's function for the operator $\GG_\LL$ \emph{(}and 
  related to the open set $\OO$\emph{)}}.
  \begin{proof}
   We begin by proving the existence part of the theorem. In order to do this,
   we make pivotal use of several results established in the very recent paper
   \cite{KimSak}. \medskip
   
   \noindent First of all, by \cite[Proposition 5.3]{KimSak} there exists
   a function $g_\LL:\OO\times\OO\to\R$ such that \medskip
   
   (i)\,\,$g_\LL(\cdot;x)\in W^{1,p}(\OO)$ for a.e.\,$x\in\OO$ and every
   $1\leq p < n/(n-1)$; \medskip
   
   (ii)\,\,for every fixed $f\in C(\clOO,\R)$ one has
   $$\GG_\LL(f)(x) = \int_{\OO}g_\LL(y;x)f(y)\,\d y \qquad\text{for a.e.\,$x\in\OO$}.$$
   Moreover, by \cite[Theorem 6.10]{KimSak} we also have that
   $$0 \leq g_\LL(y;x)\leq c_0\,\|x-y\|^{2-n} \qquad\text{for a.e.\,$x,y\in\OO$ with $x\neq y$},$$
   where $c_0 > 0$ is a suitable constant. In view of these facts, to complete the demonstration
   we
   are left to prove assertion (iii) and the point-wise estimates
   in \eqref{eq.mainestimderG}. \vspace*{0.08cm}
   
   To this end, let us introduce the so-called (formal) adjoint $\LL^T$
   of $\LL$: this is the linear differential operator defined on $\OO$ in the following way
   \begin{equation} \label{eq.generalformLLT}
  \begin{split}
  \LL^T v & := 
   -\sum_{i,j = 1}^n
  \de_{x_i}\Big(a_{i,j}(x)\de_{x_j}v + c_i(x)v\Big)+
  \sum_{i = 1}^n b_i(x)\de_{x_i}v + d(x)v \\
  & = -\mathrm{div}\Big(A(x)\nabla v
  + \mathbf{c}v\Big) + \langle \mathbf{b},\nabla v\rangle + dv.
  \end{split}
 \end{equation}
 Clearly, $\LL^T$ takes the same divergence-form
 of $\LL$ in \eqref{eq.generalformLL} (with $\mathbf{b}$ and $\mathbf{c}$
 interchanged); furthermore, due to the \textquotedblleft symmetry''
 in assumption (H4), it is readily seen that $\LL^T$ satisfies 
 the \textquotedblleft structural assumptions'' (H1)-to-(H4).
 
 As a consequence, all the results established so far
 do apply to $\LL^T$. 
  In particular, 
 for every fixed $g\in C(\clOO,\R)$ there exists a unique
 function $\mathcal{T}(g)\in C^{1,\alpha}(\clOO,\R)$ such that
 $$\LL^T\mathcal{T}(g) = g\quad \text{in $\OO$}\qquad\quad\text{and}\qquad\quad
 \mathcal{T}(g) \equiv 0\quad\text{on $\de\OO$}.$$ 
 Now, by \cite[Theorem 6.12]{KimSak} there exists a function $G:\OO\times\OO\to\R$ such that
 \medskip
   
   (iii)\,\,$G(\cdot;y)\in W^{1,p}(\OO)$ for a.e.\,$y\in\OO$ and every
   $1\leq p < n/(n-1)$; \medskip
   
   (iv)\,\,for every fixed $g\in C(\clOO,\R)$ one has
   $$\mathcal{T}(g)(y) = \int_{\OO}G(x;y)g(x)\,\d x \qquad\text{for a.e.\,$y\in\OO$}.$$
   On the other hand, since \cite[Proposition 6.13]{KimSak} shows that
   \begin{equation} \label{eq.relationGgsymmetry}
    G(x;y) = g_\LL(y;x) \qquad\text{for a.e.\,$x,y\in\OO$ with $x\neq y$},
    \end{equation}
   from (iii) we infer that 
   $g_\LL(y;\cdot) = G(\cdot;y)\in W^{1,p}(\OO)$ for almost every $y\in\OO$ and every
   exponent $p\in [1,n/(n-1))$. This is exactly assertion (III).
   
   Finally, we prove the point-wise estimates in assertion (IV). 
   First of all, since $\LL$ satisfies assumptions (H1)-to-(H4), we are entitled to apply
   \cite[Theorem 8.1]{KimSak}, ensuring that
   \begin{equation} \label{eq.estimderg1}
    \|\nabla_x G(x;y)\|\leq c_1'\,\|x-y\|^{1-n}\qquad\text{for a.e.\,$x,y\in\OO$ with $x\neq y$},
    \end{equation}
   where $c'_1 > 0$ is a suitable constant. Moreover, since
   also $\LL^T$ satisfies assumptions (H1)-to-(H4), another application
   of \cite[Theorem 8.1]{KimSak} gives
   \begin{equation} \label{eq.estimderg2}
    \|\nabla_y g_\LL(y;x)\|\leq c_1''\,\|x-y\|^{1-n}\qquad\text{for a.e.\,$x,y\in\OO$
   with $x\neq y$},
   \end{equation}
   where $c_1'' > 0$ is another suitable constant. Gathering together
   \eqref{eq.estimderg2}, \eqref{eq.estimderg1} and \eqref{eq.relationGgsymmetry} we immediately
   obtain the desired \eqref{eq.mainestimderG} (with $c_1 := \max\{c_1',c_1''\}$). \medskip
   
    As for the uniqueness part of the theorem, let us suppose that there exists
    another function $\tilde{g}:\OO\times\OO\to[0,\infty)$ satisfying (a)-(b).
    In particular, for every $\phi\in C_0^\infty(\OO,\R)$ one has
    \begin{equation} \label{eq.gLLgzero}
     \int_{\OO}\big(g_\LL(y;x)-\tilde{g}(y;x)\big)\phi(y)\,\d y = 0 \quad
    \text{for a.e.\,$x\in\OO$}.
    \end{equation}
    Now, the space $C_0^\infty(\OO,\R)$ being separable
    (with its usual LF-topology), there exists a countable
    set $\mathcal{F}\subseteq C_0^\infty(\OO,\R)$ which is dense;
    moreover, by \eqref{eq.gLLgzero}, for every $\phi\in\mathcal{F}$ there
    exists a set $E(\phi)\subseteq \OO$, with zero-Lebesgue measure, such that
    $$\int_{\OO}\big(g_\LL(y;x)-\tilde{g}(y;x)\big)\phi(y)\,\d y = 0 \quad
    \text{for all $x\in E(\phi)$}.$$
    We then define $E:=\cup_{\phi\in\mathcal{F}}E(\phi)$. Since $\mathcal{F}$
    is countable and $E(\phi)$ has zero-Lebesgue measure for every $\phi$,
    we see that $E$ has measure zero; moreover,
    for every $x\in\OO\setminus E$ we have
    $$\int_{\OO}\big(g_\LL(y;x)-\tilde{g}(y;x)\big)\phi(y)\,\d y = 0 \quad
    \text{for all $\phi\in\mathcal{F}$}.$$
    This proves that, for every $x\in\OO\setminus E$, the distribution
    $g_\LL(\cdot;x)-\tilde{g}(\cdot;x)$ vanishes on $\mathcal{F}$; the latter being
    dense, we then conclude that
    $g_\LL(\cdot;x) = \tilde{g}(\cdot;x)$ in $L^1(\OO)$ for a.e.\,$x,y\in\OO$.
    
    This ends the proof.
  \end{proof}
  \begin{rem} \label{rem.onassumptionH4}
   The approach adopted for the proof of Theorem \ref{thm.mainexistenceGreen}
   shows the reason why we have assumed that
   $d-\mathrm{div}(\mathbf{b})\geq 0$ and $d-\mathrm{div}(\mathbf{c})\geq 0$
   in the sense of distributions.
   
   In fact, under this assumption, 
   all the mentioned results in \cite{KimSak} hold both for $\LL$ and for its transpose
   $\LL^T$; in particular, this allows us to obtain point-wise estimates
    both for 
    $$\text{$\nabla_x g_\LL(y;x) = \nabla_x G(x;y)$ \qquad and \qquad
   $\nabla_y g_\LL(y;x)$}.$$
  \end{rem}
  \begin{rem} \label{rem.gLLsymmetric}
   It is contained in the proof of Theorem \ref{thm.mainexistenceGreen}
   the following fact: if $\LL$ is of the form \eqref{eq.generalformLL}
   and if $\mathbf{b} \equiv \mathbf{c}$ on $\OO$, then the 
   Green's function for $\GG_\LL$ is symmetric, that is,
   $$g_\LL(y;x) = g_\LL(x;y) \qquad\text{for a.e.\,$x,y\in\OO$}.$$
   In fact, if 
   $\mathbf{b} \equiv \mathbf{c}$ on $\OO$, then the adjoint operator
   $\LL^T$ coincides with
   $\LL$ (see \eqref{eq.generalformLLT}); thus, following the notation
   in the proof of Theorem \ref{thm.mainexistenceGreen}, we have
   $$g_\LL(x;y) = G(x;y) = g_\LL(y;x).$$
  \end{rem}
  \begin{rem} \label{rem.regulgLL}
   By carefully scrutinizing the proofs of the existence results for $g_\LL$ contained
   in \cite[Proposition 5.3]{KimSak}, 
   one can recognize that the following properties hold: \medskip

   (a)\,\,for a.e.\,$x\in\OO$ and every
   $\epsilon > 0$, we have
   $g_\LL(\cdot;x)\in W^{1,2}(\OO\setminus B(x,\epsilon))$; 

  (b)\,\,$g_\LL(\cdot;x)$ is a solution of $\LL^T u = 0$ in $\OO\setminus B(x,\epsilon)$,
  where $\LL^T$ is as in \eqref{eq.generalformLLT}. \medskip
  
  \noindent Analogously, an inspection to the proof
  of \cite[Theorem 6.12]{KimSak} shows that \medskip

   (a')\,\,for a.e.\,$y\in\OO$ and every
   $\epsilon > 0$, we have
   $G(\cdot;y) = g_\LL(y;\cdot)\in W^{1,2}(\OO\setminus B(y,\epsilon))$; 

  (b')\,\,$G(\cdot;y) = 
  g_\LL(y;\cdot)$ is a solution of $\LL u = 0$ in $\OO\setminus B(y,\epsilon)$. \medskip
  
  \noindent Gathering together all these facts, from the classical elliptic
  regularity theory (see, e.g., \cite[Corollary 8.36]{GT}) 
  we deduce that $g_\LL$ is of class $C^{1,\alpha}$
  out of the diagonal of $\OO\times\OO$. 
  \end{rem}
  We now use the point-wise estimates in \eqref{eq.mainestimG}-\eqref{eq.mainestimderG}
  to prove the following lemma.
  \begin{lem} \label{lem.propertiesGLintegral}
  Let the assumptions \emph{(H0)}-to-\emph{(H4)} be in force, and let
  $g_\LL$ be the Green's function for $\GG_\LL$. Moreover, let $\rho :=
  \mathrm{diam(\OO)}$. Then, the following estimates hold:
  \begin{align}
   & \int_{\OO}g_\LL(y;x)\,\d y \leq c_0\cdot\frac{n\,\omega_n\,\rho^2}{2} 
    \qquad\text{for a.e.\,$x\in\OO$}; \label{eq.estimGintegral} \\[0.2cm]
   & \int_{\OO}\big|\de_{x_i}g_\LL(y;x)\big|\,\d y \leq c_1\cdot n\,\omega_n\,\rho.
   \qquad\text{for a.e.\,$x\in\OO$}. \label{eq.estimderGintegral}
  \end{align}
  Here, $\omega_n$ is the Lebesgue measure of the unit ball $B(0,1)\subseteq\R^n$.
  \end{lem}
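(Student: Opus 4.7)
The plan is to reduce the two integral estimates to elementary computations of radial integrals over a ball, using the pointwise bounds on $g_\LL$ and $\nabla_x g_\LL$ already provided by Theorem \ref{thm.mainexistenceGreen}.

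For the first inequality, I would begin by observing that, since $\rho=\mathrm{diam}(\OO)$, for every fixed $x\in\OO$ we have the inclusion $\OO\subseteq B(x,\rho)$. Hence, invoking the pointwise estimate \eqref{eq.mainestimG}, for a.e.\,$x\in\OO$ one has
\begin{equation*}
 \int_{\OO}g_\LL(y;x)\,\d y
 \leq c_0\int_{\OO}\|x-y\|^{2-n}\,\d y
 \leq c_0\int_{B(x,\rho)}\|x-y\|^{2-n}\,\d y.
\end{equation*}
Passing to spherical coordinates centered at $x$, and recalling that the $(n-1)$-dimensional measure of the unit sphere equals $n\,\omega_n$, the last integral becomes
\begin{equation*}
 n\,\omega_n\int_0^\rho r^{2-n}\,r^{n-1}\,\d r = n\,\omega_n\int_0^\rho r\,\d r = \frac{n\,\omega_n\,\rho^2}{2},
\end{equation*}
which yields exactly \eqref{eq.estimGintegral}.

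For the second inequality, I would proceed analogously, but starting from the gradient estimate \eqref{eq.mainestimderG}. Since $|\de_{x_i}g_\LL(y;x)|\leq \|\nabla_x g_\LL(y;x)\|\leq c_1\|x-y\|^{1-n}$ for a.e.\,$x,y\in\OO$, the same inclusion $\OO\subseteq B(x,\rho)$ and a change to spherical coordinates give
\begin{equation*}
 \int_{\OO}|\de_{x_i}g_\LL(y;x)|\,\d y
 \leq c_1\int_{B(x,\rho)}\|x-y\|^{1-n}\,\d y
 = c_1\,n\,\omega_n\int_0^\rho \d r = c_1\,n\,\omega_n\,\rho,
\end{equation*}
which is precisely \eqref{eq.estimderGintegral}.

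There is no real obstacle here: both estimates are direct consequences of the pointwise bounds in Theorem \ref{thm.mainexistenceGreen} together with the classical integrability of $\|x-y\|^{-\beta}$ for $\beta<n$. The only minor care needed is to notice that the integrands $\|x-y\|^{2-n}$ and $\|x-y\|^{1-n}$ are integrable on $B(x,\rho)$ (the first yields a positive power of $r$, the second yields $1$), so that Fubini/Tonelli type issues do not arise and the constants $c_0\,n\omega_n\rho^2/2$ and $c_1\,n\omega_n\rho$ are independent of $x$. This completes the plan for both estimates.
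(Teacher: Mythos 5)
Your argument is correct and coincides with the paper's own proof: both use the inclusion $\OO\subseteq \overline{B(x,\rho)}$, the pointwise bounds \eqref{eq.mainestimG} and \eqref{eq.mainestimderG} from Theorem \ref{thm.mainexistenceGreen}, and a passage to spherical coordinates (with surface measure $n\,\omega_n\,t^{n-1}$) to evaluate the radial integrals. No gaps to report.
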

  \begin{proof}
   We begin by proving \eqref{eq.estimGintegral}. To this end we first notice
   that, if $x\in\OO$ is arbitrary, then $\OO\subseteq \overline{B(x,\rho)}$; 
   as a consequence, by crucially exploiting estimate \eqref{eq.mainestimG}
   we get
   \begin{align*}
    \int_{\OO}g_\LL(y;x)\,\d y & \leq c_0\,\int_{\OO}\|x-y\|^{2-n}\,\d y
    \leq c_0\,\int_{\overline{B(x,\rho)}}\|x-y\|^{2-n}\,\d y \\
    & = c_0\,\int_{\overline{B(0,\rho)}}\|y\|^{2-n}\,\d y 
    = c_0\,\int_{0}^\rho t^{2-n}\,\mathcal{H}^{n-1}(\de B(0,t))\,\d t \\
    & = c_0\,n\,\omega_n\,\int_0^\rho t\,\d t = c_0\cdot\frac{n\,\omega_n\,\rho^2}{2},
   \end{align*}
   which is exactly the desired \eqref{eq.estimGintegral}. As for the proof
   of \eqref{eq.estimderGintegral}, we argue essentially in the same way:
   by crucially exploiting the estimate~\eqref{eq.mainestimderG}
   we get 
   \begin{align*}
    \int_{\OO}\big|\de_{x_i}g_\LL(y;x)\big|\,\d y & \leq c_1\,\int_{\OO}\|x-y\|^{1-n}\,\d y
    \leq c_1\,\int_{\overline{B(x,\rho)}}\|x-y\|^{1-n}\,\d y \\
    & = c_1\,\int_{\overline{B(0,\rho)}}\|y\|^{1-n}\,\d y 
    = c_1\,\int_{0}^\rho t^{1-n}\,\mathcal{H}^{n-1}(\de B(0,t))\,\d t \\
    & = c_1\,n\,\omega_n\,\int_0^\rho\d t = c_1\cdot n\,\omega_n\,\rho,
   \end{align*}
   and this is precisely the desired inequality~\eqref{eq.estimderGintegral}.
  \end{proof}
  \begin{rem} \label{rem.estimateGGLL1}
   We explicitly observe that, by combining the estimate~\eqref{eq.estimGintegral}
   in Lemma~\ref{lem.propertiesGLintegral} with the representation
   formula~\eqref{eq.Greenfunction}, for a.e.\,$x\in\OO$ we obtain 
   $$0\leq \GG_\LL(\hat{1})(x) = \int_{\OO}g(y;x)\,\d y \leq c_0\cdot\frac{n\,\omega_n\,\rho^2}{2},$$
   where $\rho := \mathrm{diam}(\OO)$
   and $\hat{1}$ denotes the 
   constant function equal to  $1$ on $\OO$. As a consequence, since
   $\GG_\LL(\hat{1})\in C(\clOO,\R)$, we get
   $$\|\GG_\LL(\hat{1})\|_{\infty}\leq  c_0\cdot\frac{n\,\omega_n\,\rho^2}{2}.$$
  \end{rem}
  We conclude this part of the Section by deducing from \eqref{eq.Greenfunction} 
  an integral representation
  for the $x_i$-derivatives of $\GG_\LL(f)$.
  To this end we first observe that, if $f\in C(\clOO,\R)$, 
  Lemma~\ref{lem.propertiesGLintegral} ensures that the following
  \textquotedblleft potential-type'' functions are well-defined:
  \begin{equation} \label{eq.potentialPi}
   \mathcal{P}^{(i)}f(x) := \int_{\OO}\de_{x_i}g_\LL(y;x)f(y)\,\d y \qquad 
	\big(\text{for $i = 1,\ldots,n$}\big). 
  \end{equation}
  In fact, by estimate \eqref{eq.estimderGintegral} in Lemma
  \ref{lem.propertiesGLintegral} we have (for $i = 1,\ldots,n$)
  \begin{align*}
   &  \int_{O}|\de_{x_i}g_\LL(y;x)|\cdot|f(y)|\,\d y
  \leq \|f\|_{\infty}\cdot\int_{\OO}|\de_{x_i}g_\LL(y;x)|\,\d y \\
  & \qquad\quad \leq \|f\|_{\infty}\cdot
  c_1\,n\,\omega_n\,\mathrm{diam}(\OO) \qquad\qquad (\text{for a.e.\,$x\in\OO$}).
  \end{align*}
  Moreover, from the above computation we also infer that (again for $i = 1,\ldots,n$) 
  $$\text{$\mathcal{P}^{(i)}f \in L^{\infty}(\OO)$\,\,\,and\,\,\,
  $\|\mathcal{P}^{(i)}f\|_{L^\infty(\OO)} \leq \|f\|_{\infty}\cdot
  c_1\,n\,\omega_n\,\mathrm{diam}(\OO)$}.$$
  We are then ready to prove the following Proposition.
  \begin{pro} \label{prop.Piweakderivative}
   Let the assumptions \emph{(H0)}-to-\emph{(H4)} be in force,
   and let $f\in C(\clOO,\R)$. Moreover, let $i\in\{1,\ldots,n\}$
   be fixed, and let $\mathcal{P}^{(i)}f$ be as in 
   \eqref{eq.potentialPi}. Then, we have
   \begin{equation} \label{eq.Piweakderivative}
    \de_{x_i}\GG_\LL(f)(x) = \mathcal{P}^{(i)}f(x) 
    = \int_{\OO}\de_{x_i}g_\LL(y;x)f(y)\,\d y \qquad\text{for a.e.\,$x\in\OO$}.
   \end{equation}
  \end{pro}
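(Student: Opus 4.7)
The plan is to verify the identity \eqref{eq.Piweakderivative} in the sense of distributions and then invoke the fundamental lemma of the calculus of variations, since both sides are known to lie in $L^\infty(\OO)$. Fix a test function $\phi\in C_0^\infty(\OO,\R)$. Since Proposition \ref{prop.propertiesG} guarantees $\GG_\LL(f)\in C^{1,\alpha}(\clOO,\R)\subseteq W^{1,2}(\OO)$, a standard integration by parts yields
\begin{equation*}
\int_\OO \de_{x_i}\GG_\LL(f)(x)\,\phi(x)\,\d x
\;=\;-\int_\OO \GG_\LL(f)(x)\,\de_{x_i}\phi(x)\,\d x.
\end{equation*}

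Next I would substitute the Green representation \eqref{eq.Greenfunction} (which holds for a.e.\ $x\in\OO$, and that suffices for integration against $\de_{x_i}\phi\in L^\infty$) and then interchange the order of integration via Fubini's theorem. The Fubini step is justified by estimate \eqref{eq.mainestimG}, giving the pointwise bound
\begin{equation*}
\big|g_\LL(y;x)\,f(y)\,\de_{x_i}\phi(x)\big|
\;\leq\; c_0\,\|f\|_\infty\,\|\nabla\phi\|_\infty\,\|x-y\|^{2-n},
\end{equation*}
whose right-hand side is integrable on $\OO\times\OO$ (the computation already performed in Lemma \ref{lem.propertiesGLintegral} shows exactly this). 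Hence
\begin{equation*}
\int_\OO \de_{x_i}\GG_\LL(f)(x)\,\phi(x)\,\d x
\;=\;-\int_\OO f(y)\Big(\int_\OO g_\LL(y;x)\,\de_{x_i}\phi(x)\,\d x\Big)\d y.
\end{equation*}

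Now comes the key step, which is essentially the only non-routine point: for a.e.\ $y\in\OO$, assertion (III) of Theorem \ref{thm.mainexistenceGreen} guarantees that $g_\LL(y;\cdot)\in W_0^{1,p}(\OO)$ for every $p\in[1,n/(n-1))$. Since $\phi\in C_0^\infty(\OO,\R)$, the definition of weak derivative directly gives
\begin{equation*}
\int_\OO g_\LL(y;x)\,\de_{x_i}\phi(x)\,\d x
\;=\;-\int_\OO \de_{x_i}g_\LL(y;x)\,\phi(x)\,\d x
\qquad\text{for a.e.\ }y\in\OO.
\end{equation*}
This is precisely the reason why the symmetric version of assumption (H4) was imposed in the paper: without it one would not have the transpose-operator machinery needed to produce the weak regularity of $g_\LL$ in its second argument.

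Plugging this back in and applying Fubini once more — now justified by \eqref{eq.mainestimderG}, which gives the integrable majorant $c_1\,\|f\|_\infty\,\|\phi\|_\infty\,\|x-y\|^{1-n}$ on $\OO\times\OO$ — I obtain
\begin{equation*}
\int_\OO \de_{x_i}\GG_\LL(f)(x)\,\phi(x)\,\d x
\;=\;\int_\OO\Big(\int_\OO \de_{x_i}g_\LL(y;x)\,f(y)\,\d y\Big)\phi(x)\,\d x
\;=\;\int_\OO \mathcal{P}^{(i)}f(x)\,\phi(x)\,\d x.
\end{equation*}
Since $\phi\in C_0^\infty(\OO,\R)$ was arbitrary and both $\de_{x_i}\GG_\LL(f)$ and $\mathcal{P}^{(i)}f$ belong to $L^\infty(\OO)\subseteq L^1_{\loc}(\OO)$, the fundamental lemma of the calculus of variations gives the desired equality a.e.\ on $\OO$. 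The two Fubini applications and the integration by parts against $g_\LL(y;\cdot)\in W_0^{1,1}(\OO)$ are the only subtle points; both are already encoded in estimates (I), (III), (IV) of Theorem \ref{thm.mainexistenceGreen}.
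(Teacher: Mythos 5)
Your proposal is correct and follows essentially the same route as the paper's own proof: establish the distributional identity by testing against $\phi\in C_0^\infty(\OO,\R)$, apply Fubini twice (justified by the estimates of Lemma \ref{lem.propertiesGLintegral}, equivalently the pointwise bounds \eqref{eq.mainestimG}--\eqref{eq.mainestimderG}), and integrate by parts in $x$ using $g_\LL(y;\cdot)\in W_0^{1,p}(\OO)$ from Theorem \ref{thm.mainexistenceGreen}-(III), concluding since both sides are in $L^\infty(\OO)$. The only cosmetic difference is that you start from the classical integration by parts for $\GG_\LL(f)\in C^{1,\alpha}(\clOO,\R)$ and finish with the fundamental lemma, whereas the paper phrases the same computation as identifying $\mathcal{P}^{(i)}f$ with the weak derivative of $\GG_\LL(f)$.
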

  \begin{proof}
   We first notice, since $\GG_\LL(f)\in C^{1,\alpha}(\clOO,\R)$, the identity
   \eqref{eq.Piweakderivative} follows if we show that the
   $L^\infty$-function $\mathcal{P}^{(i)}f$
   is the weak derivative (in $L^1(\OO)$) of $\GG_\LL(f)$.
   To prove this fact, we ar\-gue as follows: firstly, 
   if $\phi\in C_0^\infty(\OO,\R)$, by the
   estimate~\eqref{eq.estimGintegral} in Lemma \ref{lem.propertiesGLintegral} 
   we get
   \begin{align*}
   & \int_{\OO\times\OO}g_\LL(y;x)\cdot|f(y)|\cdot|\de_{x_i}\phi(x)|\,\d x\,\d y \\
   & \qquad\quad\leq \|f\|_{C^(\clOO,\R)}\cdot\|\de_{i}\phi\|_{\infty}\cdot\int_{\OO}
   \bigg(\int_{\OO}g_\LL(y;x)\,\d y\bigg)\,\d x \\
   & \qquad\quad \leq 
   \|f\|_{C^(\clOO,\R)}\cdot\|\de_{i}\phi\|_{\infty}\cdot
   c_0\cdot\frac{n\,\omega_n\,\mathrm{diam}(\OO)^2}{2}\cdot|\mathcal{O}|;
   \end{align*}
   we are then entitled
   to apply Fubini's Theorem, obtaining
   \begin{align*}
    & \int_{\OO}\GG_\LL(f)(x)\de_{x_i}\phi(x)\,\d x
     = \int_{\OO}\bigg(\int_{\OO}g_\LL(y;x)f(y)\,\d y\bigg)\de_{x_i}\phi(x)\,\d x \\
    & \qquad\quad = \int_{\OO}\bigg(\int_{\OO}g_\LL(y;x)\de_{x_i}\phi(x)\,\d x\bigg)
    f(y)\,\d y \\
    & \qquad\quad \big(\text{since $g_\LL(y;\cdot)\in W_0^{1,1}(\OO)$,
    see Theorem \ref{thm.mainexistenceGreen}-(III)}\big) \\
    & \qquad\quad 
    = -\int_{\OO}\bigg(\int_{\OO}\de_{x_i}g_\LL(y;x)\phi(x)\,\d x\bigg)
    f(y)\,\d y =: (\star).
   \end{align*}
   On the other hand, since the
   estimate~\eqref{eq.estimderGintegral} in Lemma \ref{lem.propertiesGLintegral} 
   implies that
   \begin{align*}
   & \int_{\OO\times\OO}|\de_{x_i}g_\LL(y;x)|\cdot|f(y)|\cdot|\phi(x)|\,\d x\,\d y \\
   & \qquad\quad\leq \|f\|_{\infty}\cdot\|\phi\|_{\infty}\cdot\int_{\OO}
   \bigg(\int_{\OO}|\de_{x_i}g_\LL(y;x)|\,\d y\bigg)\,\d x \\
   & \qquad\quad \leq 
   \|f\|_{\infty}\cdot\|\phi\|_{\infty}\cdot
   c_1\cdot n\,\omega_n\,\mathrm{diam}(\OO)\cdot|\mathcal{O}|,
   \end{align*}
   another application of Fubini's Theorem is legitimate, and we get
   \begin{align*}
    (\star) = -\int_{\OO}\bigg(\int_{\OO}\de_{x_i}g_\LL(y;x)f(y)\,\d y\bigg)
    \phi(x)\,\d x
    \stackrel{\eqref{eq.potentialPi}}{=}
    - \int_{\OO}\mathcal{P}^{(i)}f(x)\phi(x)\,\d x.
   \end{align*}
   Due to the arbitrariness of $\phi\in C_0^\infty(\clOO,\R)$, we then conclude
   that $\mathcal{P}^{(i)}f$ is the weak derivative of
   $\GG_\LL(f)$ in $L^1(\OO)$, and the proof is complete.
  \end{proof}
  \begin{rem} \label{rem.identityeverywhere}
   By using the regularity of $g_\LL$ described in Remark \ref{rem.regulgLL},
   it is quite standard to recognize that, for a fixed
   $f\in C(\clOO,\R)$, the functions
   $$\OO\ni x\mapsto \int_{\OO}g_\LL(y;x)f(y)\,\d y
   \qquad\text{and}\qquad\mathcal{P}^{(1)}f,\ldots,\mathcal{P}^{(n)}f$$
   are continuous on $\OO$. As a consequence, the representation formulas
   \eqref{eq.Greenfunction} and \eqref{eq.Piweakderivative} actually hold
   true
   \emph{for every $x\in\OO$} (not only almost everywhere).
  \end{rem}
  \subsection{Spectral properties of $\GG_\LL$} \label{subsec.spectrumGGLL}
   We conclude this section by briefly turning our attention
   to the spectral properties of the Green's operator $\GG_\LL$. \medskip
   
   To begin with, we remind the following theorem (see, e.g., \cite[Theorem 8.6]{GT}).
   \begin{thm} \label{thm.spectrumLL}
    Let the assumptions \emph{(H0)}-to-\emph{(H4)} be in force. Then, there exists
    a countable and discrete set $\Sigma\subseteq (0,\infty)$ with the following property:
    \emph{for every $\sigma\in\Sigma$ the subspace of solutions of the homogeneous
    problem}
    \begin{equation} \label{eq.eigenvalueLL}
     \begin{cases}
    \LL u = \sigma u & \text{in $\OO$}, \\
    u\big|_{\de\OO} = 0,
    \end{cases}
    \end{equation}
    \emph{has positive finite dimension (as a subspace of $W^{1,2}(\OO)$)}.
   \end{thm}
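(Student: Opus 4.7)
The plan is to reduce the spectral problem for $\LL$ to Riesz--Schauder theory for the compact solution operator, and then to derive positivity from the sign assumption (H4) combined with uniform ellipticity. First I would extend $\GG_\LL$ to a bounded linear map $\tilde{\GG}_\LL\colon L^2(\OO)\to L^2(\OO)$: by Theorem \ref{thm.existenceFirst}, for every $f\in L^2(\OO)$ there is a unique $u_f\in W_0^{1,2}(\OO)$ solving $\LL u_f=f$, which defines $\tilde{\GG}_\LL$ into $W_0^{1,2}(\OO)$; composing with the compact Rellich--Kondrachov embedding $W_0^{1,2}(\OO)\hookrightarrow L^2(\OO)$ yields a compact self-map of $L^2(\OO)$ which agrees with $\GG_\LL$ on $C(\clOO,\R)$ by uniqueness.

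Next, I would set up a bijection between eigenvalues in the sense of \eqref{eq.eigenvalueLL} and nonzero eigenvalues of $\tilde{\GG}_\LL$. If $\LL u=\sigma u$ with $u\in W_0^{1,2}(\OO)\setminus\{0\}$, then $\sigma\neq 0$ (otherwise Lemma \ref{lem.uniquePoisson} forces $u=0$), and $u-\sigma\tilde{\GG}_\LL u\in W_0^{1,2}(\OO)$ is annihilated by $\LL$, hence zero by the same lemma; this gives $\tilde{\GG}_\LL u=\sigma^{-1}u$. Conversely, $\tilde{\GG}_\LL u=\mu u$ with $\mu\neq 0$ forces $u\in W_0^{1,2}(\OO)$ and $\LL u=\mu^{-1}u$. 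The Riesz--Schauder theorem applied to the compact operator $\tilde{\GG}_\LL$ on $L^2(\OO)$ then gives that its nonzero spectrum is at most countable, discrete in $\R\setminus\{0\}$, accumulates only at $0$, and each nonzero eigenspace is finite-dimensional; the bijection $\sigma\leftrightarrow\sigma^{-1}$ transfers these properties to $\Sigma$.

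To establish $\Sigma\subseteq(0,\infty)$, I would test the weak formulation of $\LL u=\sigma u$ against $\phi=u$ itself (permissible by density, since $u\in W_0^{1,2}(\OO)$), and rewrite $u\langle\mathbf{b}+\mathbf{c},\nabla u\rangle=\tfrac{1}{2}\langle\mathbf{b}+\mathbf{c},\nabla(u^2)\rangle$. This yields
\[
\sigma\int_{\OO}u^2\,\d x=\int_{\OO}\langle A\nabla u,\nabla u\rangle\,\d x+\tfrac{1}{2}\int_{\OO}\Big(du^2+\langle\mathbf{b},\nabla(u^2)\rangle\Big)\,\d x+\tfrac{1}{2}\int_{\OO}\Big(du^2+\langle\mathbf{c},\nabla(u^2)\rangle\Big)\,\d x.
\]
By (H3) and Poincar\'e's inequality the first term on the right is strictly positive, while the other two are non-negative by (H4) applied with $\varphi=u^2$. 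Thus $\sigma\,\|u\|_{L^2}^2>0$ and $\sigma>0$.

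The step I expect to be the main obstacle is this last one: since (H4) is stated for $\varphi\in C_0^\infty(\OO,\R)$ but $u^2$ lies only in $W_0^{1,1}(\OO)$, one must upgrade the sign condition to non-negative $W_0^{1,1}$-test functions through a mollification/truncation argument. This is complicated by the fact that $\mathrm{div}(\mathbf{b})$ and $\mathrm{div}(\mathbf{c})$ exist only as distributions under (H0)--(H4), so the argument has to stay at the integrated level; care is also required to justify the chain rule $\nabla(u^2)=2u\nabla u$ in $L^1(\OO)$.
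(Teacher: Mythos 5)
Your proposal is essentially correct, and it is worth noting that the paper does not prove Theorem \ref{thm.spectrumLL} at all: it is quoted from \cite[Theorem 8.6]{GT}. What you supply is the standard argument behind that citation, i.e.\ reduction to Riesz--Schauder theory for the compact solution operator. Two routine points deserve a word: boundedness of $f\mapsto u_f$ from $L^2(\OO)$ to $W_0^{1,2}(\OO)$ is not literally contained in Theorem \ref{thm.existenceFirst} (it follows from the closed graph theorem together with uniqueness, or from the usual a priori estimate), and the eigenvalue correspondence $\sigma\leftrightarrow\sigma^{-1}$ via Lemma \ref{lem.uniquePoisson} is exactly right. The obstacle you flag at the end is smaller than you fear: since $b_i,c_i,d\in C^{\alpha}(\clOO,\R)$ are bounded, the functional $\varphi\mapsto\int_{\OO}\big(d\varphi+\langle\mathbf{b},\nabla\varphi\rangle\big)\,\d x$ (and its analogue with $\mathbf{c}$) is continuous on $W^{1,1}(\OO)$; if $u_k\in C_0^\infty(\OO,\R)$ converge to $u$ in $W^{1,2}$, then $u_k^2$ are admissible non-negative test functions for (H4) converging to $u^2$ in $W^{1,1}$, which also justifies $\nabla(u^2)=2u\nabla u$, so no delicate mollification is needed. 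Your route differs from Gilbarg--Trudinger only in how $\Sigma\subseteq(0,\infty)$ is obtained: there one observes that for $\sigma\le 0$ the operator $\LL-\sigma$ still satisfies (H4), so uniqueness (Lemma \ref{lem.uniquePoisson}) excludes such eigenvalues, whereas you use a direct energy identity, correctly splitting $du^2$ into halves so that each sign condition in (H4) is used once; both are fine. One caveat, relevant to how the theorem is later invoked in Proposition \ref{prop.spectrumGL}: neither the literal statement nor your argument (nor, for that matter, the citation) yields $\Sigma\neq\emptyset$, since a compact operator need not have nonzero real eigenvalues; this is outside the statement as written, but keep it in mind.
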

   By making use of Theorem \ref{thm.spectrumLL}, we can prove the
   Proposition~\ref{prop.spectrumGL} below.
   \begin{pro} \label{prop.spectrumGL}
    Let the assumptions 
    \emph{(H0)}-to-\emph{(H4)} be in force, and let $\GG_\LL$
    be the Green's operator for $\LL$
    \emph{(}thought of as an operator from 
    $C(\clOO,\R)$ into itself\emph{)}. 
    
    Then, the following facts hold true:
    \begin{itemize}
     \item[(i)] the spectral radius $r(\GG_\LL)$ of $\GG_\LL$ is strictly positive;
     \item[(ii)] there exists a non-negative $u_0\in C^{1,\alpha}(\clOO,\R)\setminus\{0\}$ such that
     $$\GG_\LL(u_0) = r(\GG_\LL)u_0$$
    \end{itemize}
   \end{pro}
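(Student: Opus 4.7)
The plan is to deduce both assertions from the classical Krein--Rutman theorem applied to $\GG_\LL$ viewed as an endomorphism of $C(\clOO,\R)$. The three preliminary ingredients I need are already available: (a) $\GG_\LL$ is compact as an operator from $C(\clOO,\R)$ to itself, which follows from Proposition \ref{prop.propertiesG}-(ii) composed with the continuous inclusion $C^1(\clOO,\R)\hookrightarrow C(\clOO,\R)$; (b) $\GG_\LL$ is positive with respect to the cone $V_0 := C(\clOO,\R^+)$, by Proposition \ref{prop.propertiesG}-(iii); (c) $V_0$ is a solid, hence total, cone in $C(\clOO,\R)$, since the constant function $1$ lies in its interior.

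For assertion (i), my strategy is to exhibit a strictly positive eigenvalue of $\GG_\LL$ by transferring one from $\LL$. Theorem \ref{thm.spectrumLL} furnishes some $\sigma\in\Sigma\subseteq(0,\infty)$ and a nontrivial $v\in W^{1,2}_0(\OO)$ with $\LL v=\sigma v$. An elliptic bootstrap (first obtaining $v\in L^\infty(\OO)$ via a Moser-type estimate such as \cite[Thm.\,8.16]{GT}, so that the right-hand side $\sigma v$ is continuous, and then invoking Theorem \ref{thm.regulConegamma}-(i)) upgrades $v$ to $C^{1,\alpha}(\clOO,\R)$. Uniqueness of the $W^{1,2}$-solution of the Poisson problem (Lemma \ref{lem.uniquePoisson} together with Theorem \ref{thm.existenceFirst}) then forces $\GG_\LL(\sigma v)=v$, i.e., $\GG_\LL(v)=\sigma^{-1}v$. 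Consequently $\sigma^{-1}$ is a nonzero eigenvalue of $\GG_\LL$, so $r(\GG_\LL)\geq \sigma^{-1}>0$.

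For assertion (ii), I invoke the Krein--Rutman theorem for compact positive linear operators on a Banach space ordered by a total cone (see, e.g., \cite{guolak}): since $\GG_\LL$ is compact, positive with respect to the total cone $V_0$, and $r(\GG_\LL)>0$ by (i), there exists a nonzero $u_0\in V_0$ with $\GG_\LL(u_0)=r(\GG_\LL)\,u_0$. The regularity $u_0\in C^{1,\alpha}(\clOO,\R)$ comes for free: the identity $u_0=r(\GG_\LL)^{-1}\GG_\LL(u_0)$ places $u_0$ in the range of $\GG_\LL$, which by Theorem \ref{thm.regulConegamma}-(i) is contained in $C^{1,\alpha}(\clOO,\R)$.

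The most delicate step I expect is the elliptic bootstrap for the $\LL$-eigenfunction in (i): Theorem \ref{thm.regulConegamma} as stated requires a continuous right-hand side, so one must first improve the $W^{1,2}$-eigenfunction $v$ to $C(\clOO,\R)$ before being allowed to invoke it and conclude the $C^{1,\alpha}$-regularity needed to identify $v$ with $\sigma\,\GG_\LL(v)$ pointwise. Once this technical passage is handled, the rest is a routine verification of the hypotheses of Krein--Rutman, which have already been collected in Proposition \ref{prop.propertiesG}.
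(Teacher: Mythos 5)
Your argument follows essentially the same route as the paper: part (i) by transferring an eigenvalue of $\LL$ from Theorem \ref{thm.spectrumLL} to $\GG_\LL$ after upgrading the $W^{1,2}$-eigenfunction, and part (ii) by Krein--Rutman applied to the compact, positive operator $\GG_\LL$ on $C(\clOO,\R)$ with the cone $V_0$, with the $C^{1,\alpha}$-regularity of $u_0$ read off from $u_0=r(\GG_\LL)^{-1}\GG_\LL(u_0)$. The only real divergence is the regularity step in (i), and there your sketch is slightly off as written: \cite[Thm.\,8.16]{GT} gives $v\in L^\infty(\OO)$, which does \emph{not} by itself make the right-hand side $\sigma v$ continuous, so you cannot yet invoke Theorem \ref{thm.regulConegamma}-(i); you would need an intermediate De Giorgi--Nash--Moser step (global H\"older continuity, e.g.\ \cite[Thm.\,8.29]{GT}) before bootstrapping, a need you do acknowledge but do not actually discharge. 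The paper sidesteps this entirely by regarding $v$ as a $W^{1,2}$-solution of the homogeneous equation $(\LL-\sigma)v=0$ and applying \cite[Corollary 8.35]{GT} to the operator $\LL_\sigma:=\LL-\sigma$ (whose coefficients are still H\"older continuous), which yields $C^{1,\alpha}(\clOO,\R)$ regularity in one stroke; with that replacement, the rest of your proof (identification $\GG_\LL(v)=\sigma^{-1}v$ via uniqueness, and the Krein--Rutman argument for (ii)) matches the paper's.
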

   \begin{proof}
    (i)\,\,On account of Theorem \ref{thm.spectrumLL}, it is possible
    to find a real number $\sigma > 0$ and a 
    function $u_\sigma\in W^{1,2}(\OO)\setminus\{0\}$ such that
    $$\begin{cases}
    \LL u = \sigma u & \text{in $\OO$}, \\
    u\big|_{\de\OO} = 0,
    \end{cases}$$
    On the other hand, by applying 
    the classical Elliptic Regularity Theory to
    $\LL_\sigma := \LL - \sigma$
    (see, e.g., \cite[Corollary 8.35]{GT}), one can find
    a function $\hat{u}_\sigma\in C^{1,\alpha}(\clOO,\R)$
    such that 
    $$\text{$\hat{u}_\sigma \equiv u_\sigma$ a.e.\,on $\OO$};$$
    as a consequence, by the very definition of $\GG_\LL$ we infer that
    $$\GG_\LL(\hat{u}_\sigma) = \frac{1}{\sigma}\,\hat{u}_\sigma.$$
    This proves that $\lambda := 1/\sigma > 0$ lays in the (point-wise) spectrum of $\GG_\LL$
    (thought of as an operator from $C(\clOO,\R)$ into itself),
    and thus $r(\GG_\LL) > 0$. \medskip
    
    (ii)\,\,First of all, since $C^1(\clOO,\R)$ is continuously embedded
    in $C(\clOO,\R)$, 
    we straightforwardly derive from Proposition \ref{prop.propertiesG}-(ii) that
    $\GG_\LL$ is compact from $C(\clOO,\R)$ into itself; moreover,
    if we denote by $V_0$ the convex cone in $C(\clOO,\R)$ defined as
    $$V_0 := C(\clOO,\R^+) = \big\{u\in C(\clOO,\R):\,\text{$u\geq 0$ on $\OO$}\big\},$$
    we know from Proposition \ref{prop.propertiesG}-(iii) that $\GG_\LL(V_0)\subseteq V_0$.
    Since, obviously, $V_0-V_0$ is dense in $C(\clOO,\R)$ and since,
    by statement (i), the spectral radius $r(\GG_\LL)$ of $\GG_\LL$
    is strictly positive, we are entitled to apply
    Krein-Rutman's Theorem, ensuring that $r(\GG_\LL)$ is an eigenvalue
    of $\GG_\LL$ with positive eigenvector: this means that
    there exists $u_0\in V_0\setminus\{0\}$ such that
    $$\GG_\LL(u_0) = r(\GG_\LL)u_0\,\,\Longleftrightarrow\,\,
    u_0 = \frac{1}{r(\GG_\LL)}\,\GG_\LL(u_0).$$
    Now, since $u_0\in V_0\setminus\{0\}$, we have $u_0\geq 0$ and $u\not\equiv 0$ on $\OO$; moreover,
    reminding that $\GG_\LL$ maps $C(\clOO,\R)$ into $C^{1,\alpha}(\clOO,\R)$
    (see \eqref{eq.defoperatorG}), we derive that
    $u_0\in C^{1,\alpha}(\clOO,\R)$. 
    Gathering together all these facts,
    we conclude that $u_0\in C^{1,\alpha}(\clOO,\R)\setminus\{0\}$
    and that $u\geq 0$ on $\clOO$, as desired.
   \end{proof}

\section{Existence and non-existence results} \label{sec.existenceenon}

In this Section we study the solvability of the following system of second order 
elliptic differential equations subject to functional BCs
\begin{equation}
  \label{nellbvp-intro}
 \left\{
\begin{array}{lll}
 \LL_k u_k=\lambda_k\,f_k(x,u_1,\ldots,u_m, \nabla u_1,\ldots,\nabla u_m)
 &  \text{in $\OO$} & \qquad (k=1,2,\ldots,m), \\[0.15cm]
 u_k(x)=\eta_k\,\zeta_k(x)\,h_{k}[u_1,\ldots,u_m] & \text{for $x\in\de\OO$}
 & \qquad (k=1,2,\ldots,m),
\end{array}
\right.
\end{equation}
where, as in the Introduction, $m\geq 1$ is a fixed natural number, 
$\OO\subseteq\R^n$ is an open set and  
$\LL_1,\ldots,\LL_m$ are uniformly elliptic
PDOs on $\OO$ as in Section \ref{sec.preliminaries}. 
To be more precise, we suppose that 
\begin{itemize}
 \item[(I)] $\OO$ is bounded, connected and of class $C^{1,\alpha}$ for some
 $\alpha\in(0,1)$;
 \item[(II)] for every fixed $k = 1,\ldots, m$, the differential operator 
 $\LL_k$ satisfies assumptions
 (H1)-to-(H3) introduced in Section \ref{sec.preliminaries}, that is,
 \begin{itemize}
  \item[$(\ast)$]
   $\LL_k$ takes the divergence form \eqref{eq.generalformLL}, i.e.,
  \begin{equation*}
   \begin{split}
  \LL_k u & := 
   -\sum_{i,j = 1}^n
  \de_{x_i}\Big(a^{(k)}_{i,j}(x)\de_{x_j}u + b^{(k)}_i(x)u\Big)+
  \sum_{i = 1}^n c^{(k)}_i(x)\de_{x_i}u + d^{(k)}(x)u;
  \end{split}
  \end{equation*}
  \item[$(\ast)$] 
  the coefficient functions of $\LL_k$ belong to
  $C^{1,\alpha}(\clOO,\R)$;
  \item[$(\ast)$] the matrix $A^{(k)}(x) := \big(a_{i,j}^{(k)}(x)\big)_{i,j}$
  is symmetric for any $x\in\OO$;
  \item[$(\ast)$] $\LL_k$ is uniformly elliptic
  in $\OO$, i.e., there exists $\Lambda_k > 0$ such that
  $$\frac{1}{\Lambda_k}\|\xi\|^2\leq \sum_{i,j = 1}^na^{(k)}_{i,j}(x)\xi_i\xi_j\leq 
  \Lambda_k\|\xi\|^2\quad \text{for any $x\in\OO$ and $\xi\in\R^n\setminus\{0\}$};$$
  \item[$(\ast)$] for every non-negative function $\varphi\in C_0^\infty(\OO,\R)$ one has
  $$\int_{\mathcal{O}}
  \Big(d^{(k)}\varphi+{\textstyle\sum_{i = 1}^n}b^{(k)}_i\de_{x_i}\varphi\Big)\,\d x,\quad
  \int_{\mathcal{O}}
  \Big(d^{(k)}\varphi+{\textstyle\sum_{i = 1}^n}c_i^{(k)}\de_{x_i}\varphi\Big)\,\d x\geq 0.$$
 \end{itemize}
\end{itemize}
Furthermore, for every fixed $k = 1,\ldots, m$ we also assume that
\begin{itemize}
  \item[(III)] $f_k$ is a real-valued function defined on $\clOO\times\R^m\times\R^{nm}$;
  \item[(IV)] $h_k$ is a real-valued operator defined on the space $C^1(\clOO,\R^m)$;
  \item[(V)] $\zeta_k\in C^{1,\alpha}(\clOO,\R)$ and $\zeta_k\geq 0$ on $\OO$;
  \item[(VI)]$ \lambda_k,\,\eta_k$ are non-negative real parameters. 
\end{itemize}
 Throughout the sequel, if $u_1,\ldots,u_m$ are real-valued functions
 defined on $\OO$, we set
 $$\bldu(x) := \big(u_1(x),\ldots,u_m(x)\big) \qquad(x\in\OO).$$
 If, in addition, 
 $\bldu\in C^1(\OO,\R^m)$ (that is, $u_1,\ldots,u_m\in C^1(\OO,\R)$), we define
 $$D\bldu(x) := \big(\nabla u_1(x),\ldots,\nabla u_m(x)\big) \qquad (x\in \OO).$$
 Now, in view of assumptions (I)-(II), all the results presented
 in Section \ref{sec.preliminaries} can be applied to each operator
 $\LL_k$ (for a fixed $k = 1,\ldots,m$); in particular, for every
 $\mathfrak{f}\in  C(\clOO,\R)$ there exists a unique
 solution $u_{\mathfrak{f}}\in C^{1,\alpha}(\clOO,\R)$ of the Poisson problem
 \begin{equation}
  \label{eqelliptic}
	\begin{cases}
	\LL_k u = \mathfrak{f} & \text{in $\OO$}, \\
	u\big|_{\de\OO} = 0.
	\end{cases}
\end{equation}
 Furthermore, since the function $\zeta_k$ belongs to 
 $C^{1,\alpha}(\clOO,\R)$ (see
 assumption (V)), there exists a unique so\-lu\-tion 
 $\gamma_k \in C^{1,\alpha}(\clOO,\R)$ of the Dirichlet problem
 \begin{equation}
  \label{eqellipticbc}
   \begin{cases}
	\LL_k u = 0 & \text{in $\OO$}, \\
	u\big|_{\de\OO} = \zeta_k.
	\end{cases}
\end{equation}
 We then denote by $\GG_k$ the Green's operator $\GG_{\LL_k}$ for $\LL_k$
 defined in \eqref{eq.defoperatorG}, and we indicate
 by $g_k$ the Green's function $g_{\LL_k}$ for the operator
 $\GG_k$ defined through Theorem \ref{thm.mainexistenceGreen}.
 We remind that, if $\mathfrak{f}\in C(\clOO,\R)$ is arbitrary fixed,
 $\GG_k(\mathfrak{f})$ is the unique solution in $C^{1,\alpha}(\clOO,\R)$ of
 the Poisson problem \eqref{eqelliptic}; moreover, we have the representation
 formulas
 \begin{align*}
  \GG_k(\mathfrak{f})(x) = \int_{\OO}g_k(y;x)\mathfrak{f}(y)\,\d y \qquad\text{and}\qquad
  \de_{x_i}\GG_k(\mathfrak{f})(x)
  = \int_{\OO}\de_{x_i}g_k(y;x)\mathfrak{f}(y)\,\d y,
 \end{align*}
 holding true for a.e.\,$x\in\OO$ and any $i = 1,\ldots,n$
 (see Theorem \ref{thm.mainexistenceGreen} and
 Proposition \ref{prop.Piweakderivative}). 
 
 Finally, according to Proposition \ref{prop.spectrumGL}, 
 we denote by $r_k = r(\GG_k) > 0$ the spectral radius
 of the operator $\GG_k$ (thought of as an operator from $C^1(\clOO,\R)$ into itself)
 and
 we fix once and for all a function $\varphi_k\in C^{1,\alpha}(\clOO,\R)\setminus\{0\}$ such that
 (setting $\mu_k := 1/r_k$)
 \begin{equation} \label{eq.defivarphik}
 \varphi_k = \mu_k\,\GG_k(\varphi_k)\qquad\text{and}\qquad\text{$\varphi_k\geq 0$ on $\OO$}.
 \end{equation}
 Now that we have properly introduced all the \textquotedblleft mathematical objects''
 appearing in the problem~\eqref{nellbvp-intro}, it is opportune
 to define what we mean by a \emph{solution} of this problem.
 
 To this end, we first fix some notation. 
 For every index $k\in\{1,\ldots,m\}$, we denote by $\mathcal{F}_k$
 the so-called \emph{superposition (Nemytskii)} operator
 associated with $f_k$, that is,
 $$\mathcal{F}_k:C^1(\clOO,\R^m)\to C(\clOO,\R),\qquad
 \mathcal{F}_k(\bldu) := f_{{k}}(x,\bldu, D\bldu).$$
 Moreover, we consider the operators $\mathcal{T}, \Gamma:
 C^1(\clOO,\R^m)\to C^{1}(\clOO,\R^m)$ defined by
 $$\mathcal{T}(\bldu) = \big(\lambda_k\,(\GG_k\circ \mathcal{F}_k)(\bldu)\big)_{k = 1,\ldots,m}
 \qquad\text{and}\qquad
  \Gamma(\bldu) := \big(\eta_k\,\gamma_k(x)\,h_k[\bldu]\big)_{k = 1,\ldots,m}.$$
 We can now give the definition of \emph{solution} of the problem \eqref{nellbvp-intro}.
 \begin{defn} \label{def.solBVPmain}
  We say that a function  $\bldu\in C^1(\clOO, \R^m)$ 
  is a {\it weak solution} of the system~\eqref{nellbvp-intro} 
  if $\bldu$ is a fixed point of the operator $\mathcal{T}+\Gamma$, that is, 
$$
\bldu=\mathcal{T}(\bldu)+
\Gamma(\bldu)=
\big(\lambda_k\,(\GG_k\circ \mathcal{F}_k)(\bldu) + 
\eta_k\,\gamma_k(x)\,h_k[\bldu]\big)_{k = 1,\ldots,m}.
$$
If, in addition, 
the components of $\bldu$ are non-negative 
and $u_j\not\equiv 0$ for some $j$, we say that $\bldu$ is a 
\emph{nonzero positive solution} of the system~\eqref{nellbvp-intro}.
\end{defn}
For our existence result, we make use of the following proposition that states the main properties of the classical fixed point index, for more details see~\cite{Amann-rev, guolak}. In what follows the closure and the boundary of subsets of a cone $\hat{P}$ are understood to be relative to~$\hat{P}$.

\begin{pro}\label{propindex}
Let $X$ be a real Banach space and let $\hat{P}\subset X$ be a cone. Let $D$ be an open bounded set of $X$ with $0\in D\cap \hat{P}$ and
$\overline{D\cap \hat{P}}\ne \hat{P}$. 
Assume that $ T:\overline{D\cap \hat{P}}\to \hat{P}$ is a compact operator such that
$x\neq T(x)$ for $x\in \partial (D\cap \hat{P})$. \medskip

Then the fixed point index
 $i_{\hat{P}}( T, D\cap \hat{P})$ has the following properties:
 
\begin{itemize}

\item[\emph{(i)}] If there exists $e\in \hat{P}\setminus \{0\}$
such that $x\neq T(x)+\sigma e$ for all $x\in \partial (D\cap \hat{P})$ and all
$\sigma>0$, then $i_{\hat{P}}( T, D\cap \hat{P})=0$.

\item[\emph{(ii)}] If $ T(x) \neq \sigma x$ for all $x\in
\partial  (D\cap \hat{P})$ and all $\sigma > 1$, then $i_{\hat{P}}( T, D\cap \hat{P})=1$.

\item[\emph{(iii)}] Let $D^{1}$ be open bounded in $X$ such that
$(\overline{D^{1}\cap \hat{P}})\subset (D\cap \hat{P})$. If 
$i_{\hat{P}}(T, D\cap \hat{P})=1$ and 
$i_{\hat{P}}(T, D^{1}\cap \hat{P})=0$, then $T$ has a fixed point in $(D\cap \hat{P})\setminus
(\overline{D^{1}\cap \hat{P}})$. The same holds if 
$i_{\hat{P}}(T, D\cap \hat{P})=0$ and $i_{\hat{P}}(T, D^{1}\cap \hat{P})=1$.
\end{itemize}
\end{pro}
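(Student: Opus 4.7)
The plan is to derive the three properties from the abstract fixed point index theory on cones, taking for granted the foundational axioms: normalization (the index of a constant map to an interior point of $D \cap \hat{P}$ equals $1$), additivity, excision, homotopy invariance along compact homotopies without boundary fixed points, and the solution property (a nonzero index forces a fixed point). Under hypotheses (I)-(III) each construction below produces an admissible compact homotopy on $\overline{D \cap \hat{P}}$, and the conclusions follow essentially formally. This is the approach of Amann and Guo--Lakshmikantham, which is exactly the framework cited for \eqref{propindex}.

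For \textbf{(i)}, I would pick the vector $e \in \hat{P} \setminus \{0\}$ supplied by the hypothesis and consider the homotopy $H(x,t) := T(x) + t\sigma_0\, e$ on $\overline{D \cap \hat{P}} \times [0,1]$, where $\sigma_0 > 0$ is chosen so large that the inequality $\|x\| \geq \sigma_0 \|e\| - \sup \|T(\overline{D \cap \hat{P}})\|$ forces $x \notin \overline{D \cap \hat{P}}$; this is possible since $D$ is bounded and $T$ is compact, so its image is bounded. The hypothesis excludes fixed points of $H(\cdot,t)$ on $\partial(D \cap \hat{P})$ for every $t \in [0,1]$ (the case $t=0$ is covered by the standing assumption $x \neq T(x)$ on the boundary), and the choice of $\sigma_0$ guarantees that $T + \sigma_0 e$ has no fixed points anywhere in $\overline{D \cap \hat{P}}$. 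Excision/solution then gives $i_{\hat{P}}(T + \sigma_0 e,\, D \cap \hat{P}) = 0$, and homotopy invariance transfers this to $T$.

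For \textbf{(ii)}, I would use the straight-line homotopy $H(x,t) := t\,T(x)$ for $t \in [0,1]$. A fixed point $tT(x) = x$ with $x \in \partial(D \cap \hat{P})$ would force, for $t \in (0,1)$, the relation $T(x) = (1/t)\,x$ with $1/t > 1$, which is forbidden; for $t = 1$ it would contradict $x \neq T(x)$ (needed for the index to be defined); and for $t = 0$ it would give $x = 0$, impossible since $0 \in D \cap \hat{P}$ is an interior point (as $D$ is open) and not on the boundary. Hence the homotopy is admissible, and by normalization $i_{\hat{P}}(T, D \cap \hat{P}) = i_{\hat{P}}(0, D \cap \hat{P}) = 1$. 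Property \textbf{(iii)} is immediate from additivity: since $\overline{D^{1} \cap \hat{P}} \subset D \cap \hat{P}$ and $T$ has no boundary fixed points on either of the two pieces, one has $i_{\hat{P}}(T, D \cap \hat{P}) = i_{\hat{P}}(T, D^{1} \cap \hat{P}) + i_{\hat{P}}(T, (D \cap \hat{P}) \setminus \overline{D^{1} \cap \hat{P}})$; the assumed values $1$ and $0$ make the third summand nonzero, so the solution property furnishes a fixed point in the annular region.

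The only genuine subtlety is administrative rather than mathematical: one must verify that $0$ is an interior point of $D \cap \hat{P}$ (relative to $\hat{P}$) to apply normalization in \textbf{(ii)}, which is automatic from $0 \in D$ and $D$ open, and that the compactness/boundedness input lets us choose $\sigma_0$ in \textbf{(i)}. The deeper issue, namely the very construction of the fixed point index on a cone via retractions and Leray--Schauder degree, is not reproved here; it is precisely why the paper refers the reader to \cite{Amann-rev, guolak}.
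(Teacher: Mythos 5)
Your proposal is correct: the paper does not prove Proposition \ref{propindex} at all, but states it as a known result and refers to \cite{Amann-rev, guolak}, and your derivation of (i)--(iii) from the standard axioms of the cone fixed point index (homotopy invariance with the translation $T+t\sigma_0 e$, the radial homotopy $tT$ with normalization at the constant map $0\in D\cap\hat{P}$, and additivity plus the solution property) is exactly the classical argument given in those references. Nothing further is needed, since the only ingredients you assume (the index axioms) are precisely what the paper itself takes for granted.
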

We can now state a result regarding the existence of positive solutions 
for the system~\eqref{nellbvp-intro}.

In the sequel, we will consider on the space $\R^s$
(where $s$ will be either $m,n$ or $mn$)
the following maximum norm
\begin{equation} \label{eq.norminRs}
 |\mathbf{v}| := \max_{i = 1,\ldots,s}|v_i|\qquad (\text{if $\mathbf{v} = (v_1,\ldots,v_s)$}).
 \end{equation}

We will work in the Banach space $C(\clOO,\R^m)$ endowed with the norm
$$\|\mathbf{z}\|_{\infty}=\displaystyle\max_{x\in \clOO}
|\bldz(x)|
:= \max\big\{\|z_1\|_\infty,\ldots,\|z_m\|_\infty\big\}$$
where $\mathbf{z}
= (z_1,\ldots,z_m)\in C(\clOO,\R^m)$, 
compare also with \eqref{eq.defnormainfty}.
Moreover, we will consider
the Banach space $C^1(\clOO,\R^m)$ endowed with the norm
\begin{equation} \label{eq.topologyCRm}
\begin{split}
\|\bldu\|_{C^1(\clOO,\R^m)}
& := \max\Big\{
 \max_{k=1,2,\ldots,m}\|u_k\|_{\infty}, 
 \max_{k=1,2,\ldots,m}\|\nabla u_k\|_{\infty}\Big\} \\[0.05cm]
 & = 
\max\Big\{\|u_k\|_\infty,\,\|\de_{x_l}u_k\|_\infty:\,
 \text{$k = 1,\ldots,m$ and $l = 1,\ldots,n$}\Big\};
 \end{split}
\end{equation}
notice that \eqref{eq.topologyCRm} reduces to
\eqref{eq.defnormaC1scalar} when $m = 1$.
Given a finite sequence
$\varrho = \{\rho_k\}_{k=1}^m\subseteq (0,+\infty)$, we 
define
\begin{equation} \label{eq.defiIBvarrho}
I(\varrho)=\prod_{k = 1}^m\,[0,\rho_k] \quad \mbox{ and } \quad
R(\varrho)=\prod_{k = 1}^m {R}_{\rho_k}
\end{equation}
where $R_\rho=\{\mathbf{v}\in \R^{n}: |\mathbf{v}| \leq \rho\}$ (for $t > 0$); we also introduce, with abuse of notation, the sets
\begin{equation} \label{eq.deficonPPvarrho}
\begin{split}
& P := \Big\{\bldu\in C^1(\clOO,\R^m):\,\text{$u_k\geq 0$ on $\OO$ for every $k = 1,\ldots,m$}\Big\}
\qquad\text{and} \\[0.15cm]
& {P}(\varrho)=\Big\{\bldu\in C^1(\clOO,\R^m): \bldu(x) \in I(\varrho) \mbox{ and } D\bldu(x) 
\in R(\varrho) \mbox{ for all  } x \in\clOO \Big\}\subseteq P.
\end{split}
\end{equation}
\begin{thm}\label{thmsol} 
Let the assumptions \emph{(I)}-to-\emph{(VI)} be in force. Moreover, 
let us suppose that one can find
a finite sequence
$\varrho = \{\rho_k\}_{k = 1}^m\subseteq(0,\infty)$ satisfying the following hypotheses:
\begin{itemize}
\item[\emph{(a)}] For every $k = 1,\ldots,m$, one has that
\begin{itemize}
 \item[$\mathrm{(a)}_1$] $f_k$ continuous and non-negative on 
 $\clOO\times I(\varrho)\times R(\varrho)$;
 \item[$\mathrm{(a)}_2$] $h_k$ continuous, non-negative and bounded on $P(\varrho)$.
\end{itemize}

\item[\emph{(b)}] There exist $\delta \in (0,+\infty)$, $k_0\in \{1,2,\ldots,{m}\}$ 
and $\rho_0 \in (0,\displaystyle\min_{k=1,\ldots,m}{\rho_k})$ such that
\begin{equation} \label{eq.mainestimk0}
 f_{k_0}(x,\bldz,\bldw)\ge \delta z_{{k_0}}\quad \text{for every $(x,\bldz,\bldw)\in 
\clOO\times I_{0}\times B_0$},
\end{equation}
where $I_{0}:=\prod_{i=1}^m [0,\rho_0]$ and
$R_0:=\prod_{k=1}^m {R}_{\rho_0}$.

\item[\emph{(c)}] Setting, for every $k = 1,\ldots,m$,
\begin{equation} \label{eq.defiMkHk}
\begin{split}
 & M_{k}:=\max \Big\{f_{{k}}(x,\bldz,\bldw): (x,\bldz,\bldw)\in \clOO\times I(\varrho)\times 
 R(\varrho)\Big\}\qquad \text{and}
\\
& H_k:=\sup_{\bldu\in {P}(\varrho)}h_{k}[\bldu].
\end{split}
\end{equation}
the following inequalities are satisfied: 
\begin{itemize}
 \item[$\mathrm{(c)}_1$] ${\mu_{k_{0}}}\leq \delta\lambda_{k_0}$; \vspace*{0.08cm}
 
 \item[$\mathrm{(c)}_2$] 
 $\lambda_k\,M_k\,\|\GG_k(\hat{1})\|_{\infty} + 
 \eta_k\,H_k\|\gamma_k\|_{\infty} \leq \rho_k$; \vspace*{0.08cm}
 
 \item[$\mathrm{(c)}_3$]
 for any $l = 1,\ldots,n$ we have $\lambda_k\,M_k\,G_{k,l} 
 + \eta_k\,H_k\|\de_{x_l}\gamma_k\|_{\infty} \leq \rho_k$,
 where
\begin{equation}\label{intgreen}
G_{k,l}:= \sup_{x \in \OO} 
\int_{\OO}\left|\de_{x_l}g_k(y;x)\right|\,\d y\qquad
(\text{see Lemma \ref{lem.propertiesGLintegral}}).
\end{equation}
\end{itemize}
\end{itemize}
Then the system~\eqref{nellbvp-intro} has a non-zero positive weak solution $\bldu\in C^{1}
(\clOO,\R^m)$ such that 
\begin{equation} \label{eq.estimnonzero}
\|\bldu\|_{C^1(\clOO,\R^m)}\geq \rho_0\qquad
\text{and} \qquad \text{$\|u_k\|_{\infty}\leq \rho_k$ for every $k=1,\ldots,m$}.
\end{equation}
\end{thm}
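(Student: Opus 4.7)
\emph{Setup and reduction to a fixed-point equation.} The plan is to realize \eqref{nellbvp-intro} as the fixed-point equation $\bldu = T(\bldu)$ with $T:=\mathcal{T}+\Gamma$ on the cone $P$ of \eqref{eq.deficonPPvarrho}, and then apply Proposition \ref{propindex}. The decomposition $u_k = v_k + \eta_k h_k[\bldu]\,\gamma_k$, where $v_k := \lambda_k\GG_k(\mathcal{F}_k(\bldu))$ solves the homogeneous-boundary Dirichlet problem for $\LL_k$ with right-hand side $\lambda_k f_k(\cdot,\bldu,D\bldu)$, identifies weak solutions of \eqref{nellbvp-intro} with fixed points of $T$. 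Compactness of $T$ on bounded subsets of $C^1(\clOO,\R^m)$ follows, for $\mathcal{T}$, from Proposition \ref{prop.propertiesG}(ii) (each $\GG_k\circ\mathcal{F}_k$ is compact from $C^1$ to $C^1$) and, for $\Gamma$, from its finite-rank structure combined with (a)$_2$. Invariance $T(P(\varrho))\subseteq P$ follows from (a)$_1$--(a)$_2$, from $\lambda_k,\eta_k\geq 0$, from Proposition \ref{prop.propertiesG}(iii) applied to $\GG_k(\mathcal{F}_k(\bldu))$, and from Theorem \ref{thm.regulConegamma}(iii) applied to $\gamma_k$ (yielding $\gamma_k\geq 0$ on $\clOO$).

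\emph{Index one on the large set.} Define
$$D_\varrho := \{\bldu\in C^1(\clOO,\R^m):\ \|u_k\|_\infty<\rho_k\ \text{and}\ \|\nabla u_k\|_\infty<\rho_k\ \text{for every $k$}\},$$
an open bounded subset of $C^1(\clOO,\R^m)$ containing $0$, with $\overline{D_\varrho\cap P}\subseteq P(\varrho)$. I would show $i_P(T,D_\varrho\cap P)=1$ by verifying the hypothesis of Proposition \ref{propindex}(ii): if $T(\bldu)=\sigma\bldu$ for some $\sigma>1$ and $\bldu\in\partial(D_\varrho\cap P)$, the definitions of $M_k$, $H_k$ in \eqref{eq.defiMkHk} and the representation formulas of Theorem \ref{thm.mainexistenceGreen} and Proposition \ref{prop.Piweakderivative}, combined with Remark \ref{rem.estimateGGLL1} and the definition \eqref{intgreen} of $G_{k,l}$, give for all $k$ and all $l=1,\ldots,n$ the bounds
$$\sigma\|u_k\|_\infty \leq \lambda_k M_k\|\GG_k(\hat 1)\|_\infty + \eta_k H_k\|\gamma_k\|_\infty,\qquad \sigma\|\de_{x_l}u_k\|_\infty \leq \lambda_k M_k G_{k,l} + \eta_k H_k\|\de_{x_l}\gamma_k\|_\infty.$$
Conditions (c)$_2$--(c)$_3$ bound each right-hand side by $\rho_k$, yet $\bldu\in\partial(D_\varrho\cap P)$ forces at least one left-hand norm to equal $\rho_k$, contradicting $\sigma>1$.

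\emph{Index zero on the small set, conclusion, and main obstacle.} Let $D_0:=\{\bldu\in C^1(\clOO,\R^m):\|\bldu\|_{C^1(\clOO,\R^m)}<\rho_0\}$, so that $\overline{D_0\cap P}\subset D_\varrho\cap P$ (since $\rho_0<\min_k\rho_k$) and, by the definition of $\|\cdot\|_{C^1(\clOO,\R^m)}$, every $\bldu\in\overline{D_0\cap P}$ satisfies $\bldu(x)\in I_0$ and $D\bldu(x)\in R_0$ on $\clOO$. Take the displacement $e\in P\setminus\{0\}$ whose only non-zero entry, in position $k_0$, is the Krein--Rutman eigenfunction $\varphi_{k_0}$ from \eqref{eq.defivarphik}. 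To establish $i_P(T,D_0\cap P)=0$ via Proposition \ref{propindex}(i), assume $\bldu\in\partial(D_0\cap P)$ and $\sigma>0$ satisfy $\bldu = T(\bldu)+\sigma e$. Dropping the non-negative $\eta_{k_0}\gamma_{k_0}h_{k_0}[\bldu]$ contribution in the $k_0$-component and invoking (b) gives, pointwise on $\clOO$,
$$u_{k_0}\geq \lambda_{k_0}\,\delta\,\GG_{k_0}(u_{k_0})+\sigma\varphi_{k_0}.$$
Defining $\sigma^*:=\sup\{t\geq 0:\,u_{k_0}\geq t\varphi_{k_0}\ \text{on}\ \clOO\}$—finite because $u_{k_0}\in C(\clOO)$ is bounded and $\varphi_{k_0}\not\equiv 0$, and attained by closedness of pointwise inequality—linearity and positivity of $\GG_{k_0}$ (Proposition \ref{prop.propertiesG}(iii)), the eigenfunction relation $\GG_{k_0}(\varphi_{k_0})=\mu_{k_0}^{-1}\varphi_{k_0}$, and condition (c)$_1$ (equivalent to $\lambda_{k_0}\delta/\mu_{k_0}\geq 1$) together force $u_{k_0}\geq(\sigma^*+\sigma)\varphi_{k_0}$, contradicting maximality of $\sigma^*$. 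Proposition \ref{propindex}(iii) then produces a fixed point $\bldu\in(D_\varrho\cap P)\setminus\overline{D_0\cap P}$ that fulfils \eqref{eq.estimnonzero}. The hard part is precisely this last index-zero computation: the displacement $e$, the spectral information of Proposition \ref{prop.spectrumGL}, the monotonicity of $\GG_{k_0}$, and the sharp inequality (c)$_1$ must be combined with the tailored choice of $I_0\times R_0$ in (b), so that $(\bldu(x),D\bldu(x))$ lies in the exact domain where \eqref{eq.mainestimk0} can legitimately be applied.
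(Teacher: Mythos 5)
Your proposal is correct and follows essentially the same route as the paper: the same operator $\mathcal{T}+\Gamma$ on the cone, index $1$ on the large set via $\mathrm{(c)}_2$--$\mathrm{(c)}_3$ together with the representation formulas, and index $0$ on the small set via the eigenfunction and $\mathrm{(c)}_1$; your maximality argument with $\sigma^*$ is just a compact variant of the paper's iteration $u_{k_0}\geq p\,\sigma\,\varphi_{k_0}$ for all $p\in\N$, and taking $e$ with only the $k_0$-th component nonzero (rather than $(\varphi_1,\ldots,\varphi_m)$) is equally fine. The only point to add is the paper's preliminary observation that if $\mathcal{T}+\Gamma$ has a fixed point on $\partial(D_0\cap P)\cup\partial(D_\varrho\cap P)$ then that point is already a solution satisfying \eqref{eq.estimnonzero}, so that Proposition \ref{propindex}, whose standing hypothesis is fixed-point freeness on the boundary, is legitimately applied only in the remaining case.
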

\begin{proof}
For the sake of readability, we split the proof into different steps. \medskip

\textsc{Step I:} We first prove that the operator
$\AA := \mathcal{T}+\Gamma$ maps $P(\varrho)$ into $P$.

To this end, let $\bldu\in P(\varrho)$ and let
$k\in\{1,\ldots,m\}$ be fixed. Since $\bldu\in P(\varrho)$, from as\-sump\-tion 
$(\mathrm{a})_2$ we derive that
$h_k[\bldu]\geq 0$; moreover, since
$\gamma_k\geq 0$ on $\clOO$ (see Proposition \ref{prop.propertiesG}-(iii))
and since, by assumption (VI), $\eta_k\geq 0$, we get
\begin{equation} \label{eq.Gammakpositive}
 \Gamma_k(\bldu)(x) = \eta_k\,\gamma_k(x)\,h_k[\bldu]\geq 0\quad \text{for all $x\in\clOO$}.
\end{equation}
On the other hand, since $\bldu\in P(\varrho)$, by assumption $\mathrm{(a)_1}$ we
also have that
$$\mathcal{F}_k(\bldu)(x) = f_k(x,\bldu(x), D\bldu(x)) \geq 0\quad\text{for all
$x\in\clOO$};$$
as a consequence, from Proposition \ref{prop.propertiesG}-(iii) we derive that
$\GG_k(\mathcal{F}_k(\bldu))\geq 0$ on $\clOO$. Finally,
since $\lambda_k\geq 0$ (by assumption (IV)), we get
\begin{equation} \label{eq.Tkpositive}
 \mathcal{T}_k(\bldu)(x) = \lambda_k\,\GG_k(\mathcal{F}_k(\bldu)(x))\geq 0\quad
 \text{for every $x\in\clOO$}.
\end{equation}
By \eqref{eq.Gammakpositive}, \eqref{eq.Tkpositive} and the arbitrariness
of $k$, we conclude that
$\AA(P(\varrho))\subseteq P$. \medskip

\textsc{Step II:} We now prove that $\AA:P(\varrho)\to P$ is compact.
To this end, let $\{\bldu_j\}_{j\in\N}$ be a bounded sequence in
$P(\varrho)$, and let $k\in\{1,\ldots,m\}$ be fixed. Since
$h_k$ is non-negative and bounded on $P(\varrho)$ (see assumption $\mathrm{(a)_2}$), 
the sequence $\{h_k[\bldu_j]\}_j$ is bounded in $(0,\infty)$; as a consequence,
there exists $\theta_0\in[0,\infty)$ such that
(up to a sub-sequence)
\begin{equation} \label{eq.convergenceGammaut}
 \lim_{j\to\infty}\Gamma_k(\bldu_j) = \eta_k\,\gamma_k(x)\,\theta_0\qquad\text{in
 $C^1(\clOO,\R)$}.
\end{equation}
On the other hand, since $\{\bldu_j\}_j\subseteq P(\varrho)$
and since $f_k$ is continuous on $\clOO\times I(\varrho)\times R(\varrho)$
(see as\-sump\-tion $\mathrm{(a)_1}$), we have (using the notation in \eqref{eq.defiMkHk})
$$\|\mathcal{F}(\bldu_j)\|_{\infty}\leq M_k \qquad\text{for every $j\in\N$}.$$
As a consequence, since the operator $\GG_k$ is compact (as an operator
from $C(\clOO,\R)$ into $C^1(\clOO,\R)$, see Proposition \ref{prop.propertiesG}-(ii)), 
it is possible to find a function $w_k\in C^1(\clOO,\R)$ such that
(again by possibly passing to a sub-sequence)
\begin{equation} \label{eq.convergenceTut}
 \lim_{j\to\infty}\mathcal{T}_k(\bldu_j)
 = \lim_{j\to\infty}\big(\lambda_k\,\GG_k(\mathcal{F}_k(\bldu_j))\big) 
 = \lambda_k\,w_k \qquad\text{in $C^1(\clOO,\R)$}.
\end{equation}
 Gathering together \eqref{eq.convergenceGammaut}, \eqref{eq.convergenceTut}
 and \eqref{eq.topologyCRm},
 we infer that (up to a suitable sub-sequence)
 $$\lim_{j\to\infty}\AA(\bldu_j) = \big(\lambda_k\,w_k + 
 \eta_k\,\gamma_k\,\theta_0\big)_{k = 1,\ldots,m} =: \widetilde{\mathbf{u}} 
 \qquad\text{in $C^1(\clOO,\R^m)$}.$$
 Finally, since $\{\AA(\bldu_j)\}_j\subseteq P$ (by Step I) and since
 $P$ is closed, we conclude that $\widetilde{\mathbf{u}}\in P$; this proves
 the compactness of $\AA$ (as an operator from $P(\varrho)$ to $P$). \medskip
 
 \noindent To proceed further, we consider the set $P_0\subseteq C^1(\clOO,\R^m)$
 defined as follows:
 $$P_0=\Big\{\bldu\in C^1(\clOO,\R^m):\, 
 \bldu(x) \in I_0 \mbox{ and } D\bldu(x) \in R_0 \mbox{ for all  } x \in\clOO \Big\}\subseteq 
 P(\varrho),$$ 
 where $I_0$ and $B_0$ are as in assumption (b). Now, if
 the operator $\AA = \mathcal{T}+\Gamma$ has a fixed point  
 $\bldu_0\in \de P_0\cup\de P(\varrho)$ (where the boundaries
 are both relative to $P$), then $\bldu_0$ is a solution
 of problem
 \eqref{nellbvp-intro} satisfying \eqref{eq.estimnonzero}, and the theorem is proved.
 
 If, instead, $\AA$
 is fixed-point free on $\de P_0\cup\de P(\varrho)$, the fixed-point indexes
 $$i_{{P}}(\AA, \mathrm{int}(P_0)\cap P) \qquad\text{and}
 \qquad i_{{P}}(\AA, \mathrm{int}(P(\varrho))\cap P)$$
 are well-defined. Assuming this last possibility, we consider the following steps. \medskip
 
 \textsc{Step III:} In this step we prove the following fact: 
 \begin{equation} \label{eq.toproveIndexone}
  i_{{P}}(\AA, \mathrm{int}(P(\varrho))\cap P) = 1.
  \end{equation}
 According to Proposition \ref{propindex}-(ii), to prove
 \eqref{eq.toproveIndexone} it suffices to show that
 \begin{equation} \label{eq.toprovenofixedpointBDone}
  \AA(\bldu) \neq \sigma\,\bldu \qquad\text{for every $\bldu\in\de P(\varrho)$
  and every $\sigma > 1$},
 \end{equation}
 To
 establish \eqref{eq.toprovenofixedpointBDone} we argue by contradiction,
 and we suppose that there exist 
 a function $\bldu\in \de P(\varrho)$ and a real $\sigma >1$ such that 
 $$\sigma\bldu= \AA(\bldu) = \mathcal{T}(\bldu)+\Gamma(\bldu).$$ 
 Since $\bldu\in \de P(\varrho)$, 
there exists an index $k\in \{1,\ldots,m\}$ such 
that either 
$$\text{$\| u_k\|_{\infty} = \rho_k$ or $\| \nabla u_k\|_{\infty} = \rho_k$}.$$
We then distinguish these two cases.
\begin{itemize}
 \item $\| u_k \|_{\infty} = \rho_k$. In this case, by
 exploiting assumption $\mathrm{(a)_1}$ and
 \eqref{eq.defiMkHk}, we have
 \begin{equation} \label{eq.touseWMP}
  0\leq \mathcal{F}_{{k}}(\bldu)(x) = 
  f_k(x,\bldu(x),D\bldu(x)) \leq M_k \qquad\text{for all $x\in\clOO$};
 \end{equation}
 from this, we derive the following chain of inequalities:
\begin{equation} \label{idx1in}
\begin{split}
 \sigma u_k (x) & =
 \lambda_k\,\GG_k\big(\mathcal{F}_k(\bldu)\big)(x) +
  \eta_k\,\gamma_k(x)\,h_k[\bldu] \\
 & \big(\text{since $\GG_k\big(M_k\hat{1}-\mathcal{F}_k(\bldu)\big)\geq 0$,
 see \eqref{eq.touseWMP}
 and Proposition \ref{prop.propertiesG}-(iii)}\big) \\
  & \leq \lambda_k\,\GG_k\big(M_k\hat{1}\big)(x) +
  \eta_k\,\gamma_k(x)\,h_k[\bldu] \\
  & \big(\text{since $\bldu\in \de P(\varrho)\subseteq P(\varrho)$, 
  see \eqref{eq.defiMkHk}}\big) \\
&  \leq \big\|\lambda_k\,\GG_k\big(M_k\hat{1}\big)\big\|_{\infty}
+ \big\|\eta_k\,H_k\,\gamma_k\big\|_{\infty}  \\
& 
= \lambda_k\,M_k\,\big\|\GG_k(\hat{1})\|_{\infty} 
+ \eta_k\,H_k\,\|\gamma_k\|_{\infty} 
 \leq \rho_k \qquad (\text{see assumption $\mathrm{(c)}_2$}).
\end{split}
\end{equation}
 As a consequence, by taking the supremum for $x\in \clOO$ in \eqref{idx1in}
(and by reminding that $\bldu\in \de P(\varrho)\subseteq P(\varrho)$), we then  obtain 
$$\sup_{x\in\clOO}|\sigma\,u_k(x)| \leq \sigma\,\rho_k \leq \rho_k,$$ 
which is clearly a contradiction (since $\sigma > 1$). \medskip

\item $\|\nabla u_k\|_{\infty} = \rho_k$. In this case, 
 by the very definition of $\|\cdot\|_\infty$, there exists
  $l\in \{1,\ldots,n\}$ such that 
 $\|\de_{x_l}u_k\|_{\infty}=\rho_k.$
Moreover, by Proposition \ref{prop.Piweakderivative} we have
$$
\sigma\,\de_{x_l}u_k(x) =
\lambda_k\,\int_{\OO}\de_{x_l}g_k(y;x)f_k(x,\bldu(y), D\bldu(y))\,\d y 
+ \eta_k\,\de_{x_l}\gamma_k(x)\,h_k[\bldu],
$$
for a.e.\,$x\in\OO$. By means of this representation formula, we then obtain
\begin{equation*}
 \begin{split}
 & \sigma\,\big|\de_{x_l}u_k(x)| \\ 
& \qquad \leq
\lambda_k\,\int_{\OO}\big|\de_{x_l}g_k(y;x)f_k(x,\bldu(y), D\bldu(y))\big|\,\d y +
 \eta_k\,h_k\,[\bldu]\,\big|\de_{x_l}\gamma_k(x)\big| \\
 & \qquad 
 \big(\text{since $\bldu\in \de P(\varrho)\subseteq P(\varrho)$, 
 see also \eqref{eq.touseWMP}}\big) \\
 & \qquad \leq 
 \lambda_k\,M_k\,\int_{\OO}|\de_{x_l}g_k(y;x)|\,\d y
 +
 \eta_k\,H_k\,|\de_{x_l}\gamma_k(x)| \\
 & \qquad
 \leq \lambda_k\,M_k\,G_{k,l} 
 + \eta_k\,H_k\,\|\de_{x_l}\gamma_k\|_{\infty}
 \qquad(\text{see \eqref{intgreen}}).
 \end{split}
\end{equation*}
As a consequence, by taking the supremum for $x\in \clOO$ in \eqref{idx1in}
(and by reminding that $\|\de_{x_l}u_k\|_\infty = \rho_k$), from
assumption $\mathrm{(c)}_3$ we infer that 
$$
\sup_{x\in\clOO}
\big(\sigma\,\big|\de_{x_l}u_k(x)|\big)
= \sigma\,\rho_k
\leq \lambda_k\,M_k\,G_{k,l} 
 + \eta_k\,H_k\,\|\de_{x_l}\gamma_k\|_{\infty}
\leq \rho_k
$$
which is clearly a contradiction (as $\sigma > 1$).
\end{itemize}
This completes the demonstration of \eqref{eq.toprovenofixedpointBDone}. \medskip

\textsc{Step IV:} In this last step we prove the following fact:
 \begin{equation} \label{eq.toproveIndexzero}
  i_{{P}}(\AA, \mathrm{int}(P_0)\cap P) = 0.
  \end{equation}
 According to Proposition \ref{propindex}-(i), to prove
 \eqref{eq.toproveIndexzero} it suffices to show that
 there exists a suitable function $e\in P\setminus\{0\}$ satisfying the property
 \begin{equation} \label{eq.toprovenofixedpointBDzero}
  \AA(\bldu) + \sigma e \neq \bldu \qquad\text{for every $\bldu\in\de P_0$
  and every $\sigma > 0$}.
 \end{equation}
 To
 establish \eqref{eq.toprovenofixedpointBDzero}, we let
 $e := (\varphi_1,\ldots,\varphi_m)$ (where $\varphi_1,\ldots,\varphi_m$
 are as in \eqref{eq.defivarphik}) and we argue by contradiction: we thus
 suppose that there exist 
 $\bldu\in \partial P_0$ and $\sigma  >0$ such that
$$
\bldu= \AA(\bldu) + \sigma e = \mathcal{T}(\bldu)+\Gamma(\bldu)+\sigma e. 
$$
Since $\bldu\in \de P_0\subseteq P_0\subseteq P(\varrho)$
(by definition of $P_0$, see assumption (b)), we know from Step I
that $\AA(\bldu)\in P$; as a consequence, if $k_0$ is as in assumption (b), we have
$$u_{k_0} = \AA(\bldu)_{k_0} 
+ \sigma\,\varphi_{k_0}\geq \sigma  \varphi_{k_0} \qquad\text{on $\clOO$}.$$
Furthermore, by exploiting once again assumption (b) we get
\begin{equation} \label{eq.estimFk0varphi}
 \mathcal{F}_{k_0}(\bldu) = f_{k_0}(x,\bldu(x),D\bldu(x))
  \geq \delta u_{k_0}(x)\geq \delta\sigma\varphi_{k_0}(x) \qquad\text{for all$x\in\clOO$}. 
\end{equation}
Gathering together all these facts, for every $x\in \clOO$ we have
\begin{equation*}
\begin{split}
 u_{k_0}(x) & =
  \lambda_{k_0} \GG_{k_0}\big(\mathcal{F}_{k_0}(\bldu)\big)(x)
  +\eta_{k_0}\,\gamma_{k_0} (x)\,h_{k_0}[\bldu]  
  + \sigma\,\varphi_{k_0} (x) \\
  & \big(\text{since $\GG_{k_0}\big(\mathcal{F}_{k_0}(\bldu)
  - \delta\sigma\varphi_{k_0}\big)\geq 0$,
 see \eqref{eq.estimFk0varphi} and Proposition \ref{prop.propertiesG}-(iii)}\big) \\
 & \geq 
\lambda_{k_0}\,\GG_{k_0}(\delta\sigma\varphi_{k_0})(x) +
\sigma\varphi_{k_0} (x) \\
& \big(\text{since $\varphi_{k_0}$ is an eigenfunction of $\GG_{k_0}$, see
\eqref{eq.defivarphik}}\big) \\
& =  \frac{\delta\lambda_{k_0}}{\mu_{k_0}}\cdot\sigma\varphi_{k_0}(x)
 + \sigma\varphi_{k_0} (x)  \geq 2\sigma  \varphi_{k_0} (x)
 \qquad (\text{see assumption $\mathrm{(c)}_1$}).
\end{split}
\end{equation*}
By iterating the above argument, for every $x\in\clOO$ we get
$$
u_{k_0} (x)\geq p\sigma\varphi_{k_0}(x) \qquad \text{for every $p\in\mathbb{N}$},
$$
but this is contradiction with the boundedness of $u_{k_0}\in C^1(\clOO,\R)$
(as $\varphi_{k_0}\not\equiv 0$). \medskip

\noindent We are now ready to conclude the proof of the theorem: in fact,
by combining \eqref{eq.toproveIndexone}, \eqref{eq.toproveIndexzero} and Proposition
\ref{propindex}-(iii), we infer the existence of a fixed point 
$$\bldu_0\in \big(\mathrm{int}(P(\varrho))\cap P\big) \setminus P_0$$ 
of $\AA = \mathcal{T}+\Gamma$; thus, $\bldu_0$ is a solution of
\eqref{nellbvp-intro} satisfying \eqref{eq.estimnonzero}.
\end{proof}
\begin{rem} \label{rem.regulC1alfasol}
 Let the assumption and the notation of Theorem \ref{thmsol}
 do apply. We have already pointed out that, since $\zeta_1,\ldots,\zeta_m\in
 C^{1,\alpha}(\clOO,\R)$ (see assumption (V)), one has
 $$\gamma_k\in C^{1,\alpha}(\clOO,\R) \qquad\text{for every $k = 1,\ldots,m$}.$$
 As a consequence, the operator $\Gamma$ maps $C^1(\clOO,\R^m)$ into $C^{1,\alpha}(\clOO,\R^m)$.
 On the other hand, since the operators $\GG_1,\ldots,\GG_m$ map
 $C(\clOO,\R)$ into $C^{1,\alpha}(\clOO,\R)$, we also have that
 $$\mathcal{T}(C^1(\clOO,\R^m))\subseteq C^{1,\alpha}(\clOO,\R^m).$$
 Gathering together all these facts, we conclude that \emph{any} weak solution
 of \eqref{nellbvp-intro} (i.e., any fixed point
 of $\AA = \mathcal{T}+\Gamma$ in $C^1(\clOO,\R^m)$) 
 is actually of class $C^{1,\alpha}$ on $\clOO$.
\end{rem}

An elementary argument yields the following non-existence result.

\begin{thm}\label{thmnonex} 
Let the assumptions \emph{(I)}-to-\emph{(IV)} be in force. Moreover, 
let us suppose that there exists
a finite sequence
$\varrho = \{\rho_k\}_{k = 1}^m\subseteq(0,\infty)$
such that, for every $k = 1,\ldots,m$, the following conditions hold:

\begin{itemize}
\item[\emph{(a)}] $f_k$ is continuous on 
$\clOO\times I(\varrho)\times R(\varrho)$, and there exist $\tau_k \in (0,+\infty)$
such that
$$0\le f_{k}(x,\bldz,\bldw)\le \tau_k z_k\quad \text{for every $(x,\bldz,\bldw)\in 
\clOO\times I(\varrho)\times R(\varrho)$},$$ 

\item[\emph{(b)}] $h_k$ is continuous on $P(\varrho)$ and there exist $\xi_k \in (0,+\infty)$
such that
$$ h_{k}[\bldu] \le \xi_k \cdot \|\bldu\|_\infty, \quad \text{for every $u\in 
P(\varrho)$},$$

\item[\emph{(c)}] the following inequality holds: 
\begin{equation} \label{eq.estimleq1absurd}
 \lambda_k\tau_k\,\|\GG_k(\hat{1})\|_{\infty} + 
 \eta_k\,\xi_k\|\gamma_k\|_{\infty} <1.
 \end{equation}
\end{itemize}
Then the system~\eqref{nellbvp-intro} has at most the zero solution in $P(\varrho)$.
\end{thm}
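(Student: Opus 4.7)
The plan is elementary and does not require the fixed-point index machinery of Theorem \ref{thmsol}: I would start from the fixed-point identity $\bldu = \mathcal{T}(\bldu)+\Gamma(\bldu)$ satisfied by any putative solution $\bldu\in P(\varrho)$, and, by bounding each component of $\bldu$ in the sup norm, I would derive an inequality of the form $\|\bldu\|_\infty \le C\,\|\bldu\|_\infty$ with $C<1$, thus forcing $\bldu\equiv 0$.

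More concretely, for each fixed $k\in\{1,\ldots,m\}$ and every $x\in\clOO$, the fixed-point relation reads
\[
 u_k(x) = \lambda_k\,\GG_k\bigl(\mathcal{F}_k(\bldu)\bigr)(x) + \eta_k\,\gamma_k(x)\,h_k[\bldu].
\]
Since $\bldu\in P(\varrho)$, for every $y\in\clOO$ the triple $(y,\bldu(y),D\bldu(y))$ lies in $\clOO\times I(\varrho)\times R(\varrho)$, so assumption (a) gives the pointwise bound $0 \le \mathcal{F}_k(\bldu)(y) \le \tau_k\,u_k(y) \le \tau_k\,\|\bldu\|_\infty$. The linearity of $\GG_k$ and its monotonicity on non-negative data (Proposition \ref{prop.propertiesG}-(iii)) then imply
\[
 0 \le \GG_k\bigl(\mathcal{F}_k(\bldu)\bigr)(x) \le \tau_k\,\|\bldu\|_\infty\,\GG_k(\hat{1})(x).
\]
Combining this with the estimate $h_k[\bldu] \le \xi_k\,\|\bldu\|_\infty$ from assumption (b), together with the non-negativity of $\gamma_k$ and $\eta_k$, and taking the supremum over $x\in\clOO$, I would obtain
\[
 \|u_k\|_\infty \le \bigl(\lambda_k\,\tau_k\,\|\GG_k(\hat{1})\|_\infty + \eta_k\,\xi_k\,\|\gamma_k\|_\infty\bigr)\,\|\bldu\|_\infty.
\]

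To finish, I would take the maximum over $k\in\{1,\ldots,m\}$ of both sides and invoke the strict inequality \eqref{eq.estimleq1absurd} of assumption (c): the right-hand side becomes $C\,\|\bldu\|_\infty$ with $C<1$, so $\|\bldu\|_\infty\le C\,\|\bldu\|_\infty$ is possible only if $\|\bldu\|_\infty=0$, i.e.\ $\bldu\equiv 0$. I do not expect any real obstacle here: the only ingredients are the monotonicity of $\GG_k$ (Proposition \ref{prop.propertiesG}-(iii)) and the finiteness of $\|\GG_k(\hat{1})\|_\infty$ and $\|\gamma_k\|_\infty$ (which follow from Theorem \ref{thm.regulConegamma} and Remark \ref{rem.estimateGGLL1}), while the strictness of the inequality in (c) is precisely what promotes $\|\bldu\|_\infty\le C\,\|\bldu\|_\infty$ from a tautology into the desired forcing.
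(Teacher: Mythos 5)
Your argument is correct and is essentially the paper's own proof: the same pointwise bound $0\le \mathcal{F}_k(\bldu)\le \tau_k u_k$, the same use of linearity and positivity of $\GG_k$ together with assumption (b) to get $\|u_k\|_\infty \le \bigl(\lambda_k\tau_k\|\GG_k(\hat 1)\|_\infty+\eta_k\xi_k\|\gamma_k\|_\infty\bigr)\|\bldu\|_\infty$, and the strict inequality in (c) to force $\bldu\equiv 0$. The only cosmetic difference is that the paper fixes an index $j$ realizing $\|\bldu\|_\infty$ and concludes by contradiction, whereas you take the maximum over $k$ at the end; the two are interchangeable.
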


\begin{proof}
 We argue by contradiction and we assume that \eqref{nellbvp-intro}
 has a solution $\bldu\in P(\varrho)\setminus\{0\}$. According to Definition
 \ref{def.solBVPmain}, this means that $\bldu$ is a fixed point of 
 the operator
 $\mathcal{A} = \mathcal{T}+\Gamma$.
 Setting $\rho := \|\bldu\|_\infty > 0$, we let 
 $j\in \{1,2,\ldots,m\}$ such that  
 \begin{equation} \label{eq.choicejabsurd}
 \|u_{j}\|_{\infty}=\rho.
 \end{equation}
 For every $x\in\clOO$, we then have 
 \begin{equation} \label{eq.touseabsurd}
  0\leq \mathcal{F}_{j}(\bldu)(x) = 
  f_j(x,\bldu(x),D\bldu(x)) \leq \tau_j u_j(x) \leq \tau_j \rho;
 \end{equation}
 from this, we obtain
\begin{equation} \label{nonext}
\begin{split}
u_{j}(x) & =
 \lambda_j\,\GG_j\big(\mathcal{F}_j(\bldu)\big)(x) +
  \eta_j\,\gamma_j(x)\,h_j[\bldu] \\ 
  & (\text{since $\GG_j(\tau_j\rho\cdot\hat{1}-\mathcal{F}_j(\bldu))\geq 0$, see 
  \eqref{eq.touseabsurd} and 
  Proposition \ref{prop.propertiesG}-(iii)}) \\
  & \leq \lambda_j\,\GG_j\big(\tau_j \rho \hat{1}\big)(x) +
  \eta_j\,\gamma_j(x)\,h_j[\bldu] \\
  & (\text{by assumption (b) and since $\|\bldu\|_\infty = \rho$}) \\
  & \leq \big\|\lambda_j\,\GG_j\big(\tau_j \rho\hat{1}\big)\big\|_{\infty}
+ \big\|\eta_j\,\xi_j \rho\,\gamma_j\big\|_{\infty}  \\
& = \left(\lambda_j\,\tau_j\,\big\|\GG_k(\hat{1})\|_{\infty} 
+ \eta_j\,\xi_j\,\|\gamma_j\|_{\infty}\right) \rho.
\end{split}
\end{equation}
By taking the supremum in~\eqref{nonext} for $x\in \clOO$, 
from \eqref{eq.estimleq1absurd} and \eqref{eq.choicejabsurd} 
we finally obtain 
$$\rho = \sup_{x\in\clOO}u_j(x) \leq 
\left(\lambda_j\,\tau_j\,\big\|\GG_k(\hat{1})\|_{\infty} 
+ \eta_j\,\xi_j\,\|\gamma_j\|_{\infty}\right)\rho < \rho,$$
a contradiction. Thus, problem \eqref{nellbvp-intro}
cannot have nonzero solutions  in $P(\varrho)$.
\end{proof}

\section{Examples} \label{sec.examples}
  In this last section we present a couple of concrete examples
  illustrating the applicability of 
  our main results, namely Theorems \ref{thmsol}
  and \ref{thmnonex}.

\begin{ex} \label{exm.existence}
 On Euclidean space $\R^3$, let us consider the following BVP
 \begin{equation} \label{eq.BVPexistence}
  \begin{cases}
   -\Delta u_1 = \lambda_1\,e^{u_1}\big(1+|\nabla u_2|^2\big) & \text{in $B$}, \\
   -\Delta u_2 = \lambda_2\,(16-u_2^2)\cos\big(\langle \nabla u_1,\nabla u_2\rangle\big)
   & \text{in $B$}, \\
   u_1\big|_{\de B} = \eta_1\big(u_1(0)+u_2(0)\big), \\
   u_2\big|_{\de B} = \eta_2\int_{\de B_1}
   u_1(1-|\nabla u_2|^2)\,\d\sigma,
  \end{cases}
 \end{equation}
 where $B$ is the Euclidean ball centered at $0$ with radius $1$,
and $|\cdot|$ is the max norm in $\R^3$, as in \eqref{eq.norminRs}.
 
 Obviously,
 this problem takes the form
 \eqref{nellbvp-intro} with
 (here and throughout,
 we denote the points of $\R^6$ by 
 $\bldw = (\mathbf{w}_1,\mathbf{w}_2)$, with $\bldw_1,\bldw_2\in \R^3$)
 \begin{itemize}
  \item[(i)] $\mathcal{O} := B$; \vspace*{0.05cm}
  \item[(ii)] $\LL_1 = \LL_2 = -\Delta$; \vspace*{0.05cm}
  \item[(iii)] $f_1:\overline{B}\times\R^2\times\R^6\to\R, \quad
  f_1(x,\bldz,\bldw) = e^{z_1}(1+|\bldw_2|^2)$; \vspace*{0.05cm}
  \item[(iv)] $f_2: \overline{B}\times\R^2\times\R^6\to\R, \quad
  f_2(x,\bldz,\bldw) = (16-z_2^2)\cos(\langle \bldw_1, \bldw_2\rangle)$; \vspace*{0.05cm}
  \item[(v)] $h_1:C^1(\overline{B},\R^2)\to \R, 
  \quad h_1[u_1,u_2] := u_1(0)+u_2(0)$; \vspace*{0.05cm}
  \item[(vi)] $h_2:C^1(\overline{B},\R^2)\to \R, \quad h_1[u_1,u_2] := 
  \int_{\de B}u_1^2(1-|\nabla u_2|^2)\,\d\sigma$;
  \item[(vii)] $\zeta_1\equiv \zeta_2 \equiv 1$.
 \end{itemize}
 Furthermore, it is straightforward to check that all the
 structural assumptions (I)-to-(VI) listed at the beginning of 
 Section \ref{sec.existenceenon} are satisfied (for every
 $\alpha\in(0,1)$). We now aim to show that, in this case,
 also assumptions (a)-to-(c) in statement of Theorem \ref{thmsol}
 are fulfilled. \medskip
 
 \textbf{Assumption (a).} To begin with, we consider the
 finite sequence 
 \begin{equation} \label{eq.choicerhoexmexistence}
  \varrho = \{\rho_1,\rho_2\}, \qquad\text{where $\rho_1 = \rho_2 = \sqrt{\frac{\pi}{6}}$}.
 \end{equation}
 Clearly, the function $f_1$ is continuous and non-negative
 on $\overline{B}\times I(\varrho)\times R(\varrho)$ (see \eqref{eq.defiIBvarrho} for the definition
 of $I(\varrho)$ and $R(\varrho)$); moreover,
 since $\rho_1,\rho_2\leq 4$ and since, by Cauchy-Schwarz inequality, we have
 (remind the definition of $|\cdot|$ in \eqref{eq.norminRs})
 \begin{align*}
  |\langle \bldw_1,\bldw_2\rangle|\leq 3\,|\bldw_1|\cdot|\bldw_2|\leq \frac{\pi}{2}
  \qquad\text{for any $\bldw = (\bldw_1,\bldw_2)\in R(\varrho)$},
 \end{align*}
 we easily deduce that also $f_2$ is (continuous and) non-negative
 on $\overline{B}\times I(\varrho)\times R(\varrho)$.
 
 As for the operators $h_1,h_2$, it is immediate to check that
 they are (continuous and) non-negative when restricted to the cone
 $ P(\varrho)$ (note that, if $\bldu\in P(\varrho)$, we have
 $|\nabla u_2| \leq \rho_2 < 1$); furthermore, 
 since $\bldu = (u_1,u_2)\in P(\varrho)$ implies
 that $0\leq u_1,u_2\leq\sqrt{\pi/6}$, we have
 \begin{align} \label{eq.estimh1h2exmexistence}
  h_1[\bldu] & = h_1[u_1,u_2] \leq 2\sqrt{\frac{\pi}{6}} \qquad\text{and}\qquad
  h_2[\bldu] = h_2[u_1,u_2]\leq \frac{\pi}{6}\big|\de B\big| = \frac{2\pi^2}{3}.
 \end{align}
 Thus, $h_1,h_2$ are bounded on $P(\varrho)$, and this proves
 that assumption (a) is fulfilled. \medskip
 
 \textbf{Assumption (b).} First of all we observe that, by definition, one has
 $$f_1(x,\bldz,\bldw) \geq e^{z_1} \qquad\text{for every
 $(x,\bldz,\bldw)\in\overline{B}\times\R^2\times\R^6$};$$
 as a consequence, given any $\delta > 0$,
 it is possible to find
 a small $\rho_0 = \rho_0(\delta) \in (0,\sqrt{\pi/6})$ such that
 (here, $I_0 = [0,\rho_0]\times [0,\rho_0]$ and 
 $R_0:=R_{\rho_0}\times R_{\rho_0}$, see \eqref{eq.defiIBvarrho})
 $$f_1(x,\bldz,\bldw) \geq e^{z_1}\geq \delta z_1\qquad\text{for every $(x,\bldz,\bldw)\in \overline{B}
 \times I_0\times R_0$}.$$
 This proves that $f_1$ satisfies
 \eqref{eq.mainestimk0}, and thus assumption (b) is fulfilled (with
 $k_0 = 1$). \medskip
 
 \textbf{Assumption (c).} We begin by explicitly computing
 the quantities appearing in \eqref{eq.defiMkHk}. On the one hand,
 by the very definition of $f_1,f_2$ we have
 \begin{equation} \label{eq.M12exmexistence}
  M_1 = \max_{\overline{B}\times I(\varrho)\times R(\varrho)}f_1
 = e^{\sqrt{\pi/6}}\Big(1+\frac{\pi}{6}\Big)\qquad\text{and}\qquad
 M_2 = \max_{\overline{B}\times I(\varrho)\times R(\varrho)}f_2 = 16.
 \end{equation}
 On the other hand, on account of \eqref{eq.estimh1h2exmexistence}, we have
 (notice that the constant function defined on $\overline{B}$ by
 $\bldu := (\sqrt{\pi/6},0)$
 certainly belongs to $P(\varrho)$)
 \begin{equation} \label{eq.H12exmexistence}
  H_1 = \sup_{\bldu\in P(\varrho)}h_1[\bldu]
 = 2\sqrt{\frac{\pi}{6}}\qquad\text{and}\qquad 
 H_2 = \sup_{\bldu\in P(\varrho)}h_2[\bldu] = \frac{2\pi^2}{3}.
 \end{equation}
 We now observe that, since $\LL_1 = \LL_2 = -\Delta$
 (and taking into account the very
 definition of Green operator, see \eqref{eq.defoperatorG}), one obviously has
 $$\GG_{1}(\hat{1}) = \GG_{\LL_1}(\hat{1}) = \GG_{(-\Delta)}(\hat{1})
 \qquad\text{and} \qquad
 \GG_{2}(\hat{1}) = \GG_{\LL_2}(\hat{1}) = \GG_{(-\Delta)}(\hat{1}),$$
 where $\GG_{(-\Delta)}(\hat{1})$ is the unique solution
 of
 $$\begin{cases}
 -\Delta u = 1 & \text{in $B$}, \\
 u\big|_{\de B} = 0.
 \end{cases}
 $$
 As a consequence, since a direct computation gives 
 $\GG_{(-\Delta)}(\hat{1}) = \frac{1}{2}(1-\|x\|^2)$, we get
 \begin{equation} \label{eq.Green12exmexistence}
 \|\GG_1(\hat{1})\|_{\infty} = \|\GG_2(\hat{1})\|_\infty = \frac{1}{2}.
 \end{equation}
 Analogously, since $\zeta_1\equiv\zeta_2\equiv 1$
 (and again since $\LL_1 = \LL_2 = -\Delta$),
 from \eqref{eqellipticbc} we deduce that
 $\gamma_1 = \gamma_2 = \hat{\gamma}$,
 where $\hat{\gamma}$ is the unique solution of
 $$
 \begin{cases}
   \Delta u = 0& \text{in $B$}, \\
 u\big|_{\de B} = 1.
 \end{cases} 
 $$
 As a consequence, since $\hat{\gamma} \equiv 1$ clearly solves
 the above problem, we get
 \begin{equation} \label{eq.gamma12exmexistence}
  \|\gamma_1\|_\infty = \|\gamma_2\|_\infty = 1.
  \end{equation}
 Finally, according to \eqref{intgreen}, we turn to provide an explicit estimate for
 $$\sup_{x \in B} 
 \int_{B}\left|\de_{x_l}g_{(-\Delta)}(y;x)\right|\,\d y
 \qquad(\text{with $l = 1,2,3$}),$$
 where $g_{(-\Delta)}$ is the Green function for $(-\Delta)$
 (and related to $B$). To this end,
 we make crucial use of the \emph{explicit}
 expression of $g_{(-\Delta)}$ (see, e.g., \cite[Section 2.2.4-(c)]{Evans})):
 \begin{equation} \label{eq.explicitgDelta}
  g_{(-\Delta)}(y;x) = 
 \frac{1}{4\pi}\,\bigg(
 \|x-y\|^{-1}-\Big(1 + \|x\|^2\,\|y\|^2-2\langle x,y\rangle\Big)^{-1/2}\bigg)
 \end{equation}
 where $\|\cdot\|$ is the usual Euclidean norm in $\R^3$. 
 Starting from \eqref{eq.explicitgDelta}, a direct yet tedious computation
 shows that (for every $x,y\in B$ with $x\neq y$)
 $$|\de_{x_l}g_{(-\Delta)}(y;x)| \leq \frac{1}{2\pi\|x-y\|^2};$$
 as a consequence, for every $x\in B$ we have
 \begin{align*}
  & \int_{B}\left|\de_{x_l}g_{(-\Delta)}(y;x)\right|\,\d y 
  \leq \frac{1}{2\pi}\int_B\|x-y\|^{-2}\,\d y
  \leq \frac{1}{2\pi}\int_{\{\|x-y\|<2\}}\|x-y\|^{-2}\,\d y \\
  & \qquad = \frac{1}{2\pi}\,\int_{\{\|y\|<2\}}\|y\|^{-2}\,\d y 
  = \frac{1}{2\pi}\,\big|\de B\big|\,\int_0^{2}\d\rho  = 4.
 \end{align*}
 Thus, taking into account that $\LL_1 = \LL_2 = -\Delta$, we obtain
 \begin{equation} \label{eq.G12exmexistence}
 \GG_{1,l} = \GG_{2,l}
 = \sup_{x \in B} \int_{B}\left|\de_{x_l}g_{(-\Delta)}(y;x)\right|\,\d y
 \leq 4, \qquad\text{for every $l = 1,2,3$}.
 \end{equation}
 By gathering together
 \eqref{eq.choicerhoexmexistence}, 
 \eqref{eq.M12exmexistence}, \eqref{eq.H12exmexistence},
 \eqref{eq.Green12exmexistence}, \eqref{eq.gamma12exmexistence}
 and \eqref{eq.G12exmexistence}, we are finally entitled
 to apply Theorem \ref{thmsol}: for any $\lambda_1 > 0$
 and any $\lambda_2,\eta_1,\eta_2 \geq 0$
 satisfying
 \begin{align*}
 & (\ast)\qquad\frac{\lambda_1}{2}\,e^{\sqrt{\pi/6}}\Big(1+\frac{\pi}{6}\Big)
  + 2\,\eta_1\sqrt{\frac{\pi}{6}} \leq \sqrt{\frac{\pi}{6}} \qquad\quad
  (\text{see assumption $\mathrm{(c)}_2$}), \\[0.05cm]
  & (\ast)\qquad \lambda_2 + \frac{2\pi^2}{3}\,\eta_2 \leq \sqrt{\frac{\pi}{6}}; 
  \qquad\quad (\text{see assumption $\mathrm{(c)}_2$}), \\[0.05cm]
  & (\ast)\qquad
  \max\bigg\{4\lambda_1\,e^{\sqrt{\pi/6}}\Big(1+\frac{\pi}{6}\Big),
  64\lambda_2\bigg\} \leq \sqrt{\frac{\pi}{6}}, \qquad\quad
  (\text{see assumption $\mathrm{(c)}_3$}), \\[0.05cm]
 \end{align*}
   there exists at least one solution $\bldu = (u_1,u_2)\in C^1(\overline{B},\R^2)$
  of \eqref{eq.BVPexistence} such that
  $$\|u_1\|_\infty,\,\|u_2\|_\infty\leq \sqrt{\frac{\pi}{6}} \qquad
  \text{and} \qquad \|\bldu\|_{C^1(\overline{B},\R^2)}\geq \rho_0.$$
  Here, $\rho_0  = \rho_0(\delta) > 0$ is as in assumption (b) and
  $\delta > 0$ is such that $\mu_1\leq\delta\lambda_1$
  (see assumption $\mathrm{(c)_1}$ and remind that $\mu_1 > 0$
 denotes the inverse of the spectral radius of $\LL_1 = -\Delta$,
 see \eqref{eq.defivarphik}). 
  It should be noticed that, since
  \eqref{eq.mainestimk0} holds \emph{for any given
  $\delta > 0$} (by accordingly choosing $\rho_0 = \rho_0(\delta) > 0$), 
  there is no need to have an explicit knowledge of $\mu_1$.
\end{ex}
\begin{ex} \label{exm.nonexistence}
 On Euclidean space $\R^3$, let us consider the following BVP
 \begin{equation} \label{eq.BVPnonexistence}
  \begin{cases}
   -\Delta u_1 = \lambda_1\,u_1^2\big(1-e^{-|\nabla u_2|}\big) & \text{in $B$}, \\
   -\Delta u_2 = \lambda_2\,\sin(u_2)\big(u_1^3+|\langle \nabla u_1,\nabla u_2\rangle|\big)
   & \text{in $B$}, \\
   u_1\big|_{\de B} = \eta_1\int_Bu_2^2\,\d x, \\
   u_2\big|_{\de B} = \eta_2\max\limits_{\de B}u_1,
  \end{cases}
 \end{equation}
 where $B$ is the Euclidean ball with centre $0$ and radius $1$ and 
 we adopt the same notation of Example \ref{exm.existence}.
 
 Obviously,
 this problem takes the form
 \eqref{nellbvp-intro} with
 \begin{itemize}
  \item[(i)] $\mathcal{O} := B$; \vspace*{0.05cm}
  \item[(ii)] $\LL_1 = \LL_2 = -\Delta$; \vspace*{0.05cm}
  \item[(iii)] $f_1:\overline{B}\times\R^2\times\R^6\to\R, \quad
  f_1(x,\bldz,\bldw) = z_1^2(1-e^{|\bldw_2|})$; \vspace*{0.05cm}
  \item[(iv)] $f_2: \overline{B}\times\R^2\times\R^6\to\R, \quad
  f_2(x,\bldz,\bldw) = \sin(z_2)(z_1^3+|\langle \bldw_1,\bldw_2\rangle|)$; \vspace*{0.05cm}
  \item[(v)] $h_1:C^1(\overline{B},\R^2)\to \R, 
  \quad h_1[u_1,u_2] := \int_B u_2^2\,\d x$; \vspace*{0.05cm}
  \item[(vi)] $h_2:C^1(\overline{B},\R^2)\to \R, \quad h_1[u_1,u_2] := \max\limits_{\de B}u_1$;
  \item[(vii)] $\zeta_1\equiv \zeta_2 \equiv 1$.
 \end{itemize}
 Furthermore, it is straightforward to check that all the
 structural assumptions (I)-to-(VI) listed at the beginning of 
 Section \ref{sec.existenceenon} are satisfied (for every
 $\alpha\in(0,1)$). We now aim to show that, in this case,
 assumptions (a)-to-(c) in statement of Theorem \ref{thmnonex}
 are fulfilled. \medskip
 
 \textbf{Assumption (a).} To begin with, we consider the
 finite sequence 
 \begin{equation} \label{eq.choicerhononexmexistence}
  \varrho = \{\rho_1,\rho_2\}, \qquad\text{where $\rho_1 = \rho_2 = 1$}.
 \end{equation}
 Clearly, the function $f_1$ is continuous and non-negative
 on $\overline{B}\times I(\varrho)\times R(\varrho)$; moreover, 
 for every  $(x,\bldz,\bldw)\in
   \overline{B}\times I(\varrho)\times R(\varrho)$
 one has (notice that, if $z\in I(\varrho)$, then $0\leq z_1\leq 1$) 
 \begin{equation} \label{eq.f1assumptionanonexistence}
   0\leq f_1(x,\bldz,\bldw) = z_1\cdot \big(z_1(1-e^{-|\bldw_2|})\big)
   \leq u_1.
 \end{equation}  
 Thus, $f_1$ fulfills assumption (a) (with $\tau_1 = 1$).
 
 As regards $f_2$, we obviously have that
 also this function is continuous and non-negative on 
 $\overline{B}\times I(\varrho)\times R(\varrho)$; moreover, 
 since $0\leq \sin(t)\leq t$ for every $0\leq t\leq 1$, we have
 \begin{equation} \label{eq.f2assumptionanonexistence}
  \begin{split}
   & 0\leq f_2(x,\bldz,\bldw) \leq z_2\big(1+|\langle 
   \bldw_1, \bldw_2\rangle|\big) \\
   & \qquad (\text{by Cauchy-Schwarz inequality, see Example \ref{exm.existence}}) \\
   & \qquad \leq z_2\big(1+3\,|\bldw_1|\cdot|\bldw_2|\big) \\
   & \qquad (\text{since $w = (\bldw_1,\bldw_2)\in R(\varrho)$ implies that 
   $|\bldw_1|,|\bldw_2|\leq 1$}) \\
   & \qquad \leq 4z_2 \qquad\qquad
   (\text{for every  $(x,z,w)\in
   \overline{B}\times I(\varrho)\times R(\varrho)$}).
  \end{split}
 \end{equation}
 As a consequence, also $f_2$ satisfies assumption (a)
 (with $\tau_2 = 4$). \medskip
 
 \textbf{Assumption (b).} First of all, it is very
 easy to check that both $h_1$ and $h_2$ are continuous and non-negative
 when restricted to the cone $P(\varrho)\subseteq C^1(\overline{B},\R)$;
 moreover, since the condition $\bldu = (u_1,u_2)\in P(\varrho)$ implies that 
 $0\leq u_1,u_2\leq 1$, we get
 \begin{equation} \label{h1assumptionbnonexistence}
  h_1[\bldu] = h_1[u_1,u_2] \leq \int_Bu_2\,\d x \leq
  \big(\max_{\overline{B}}u_2\big)\cdot|B_1| \leq \frac{4\pi}{3}\,\|\bldu\|_\infty,
 \end{equation}
 and this proves that $h_1$ fulfills assumption (b) (with $\xi_1 = (4\pi)/3$).
 
 Finally, by exploiting the very definition of $\|\cdot\|_\infty$, we have
 \begin{equation} \label{h2assumptionbnonexistence}
  h_2[\bldu] = h_2[u_1,u_2] = \max_{\de B}u_1 \leq \|\bldu\|_\infty,
 \end{equation}
 and thus also $h_2$ satisfies assumption (b) (with $\xi_2 = 1$). \medskip
 
 \textbf{Assumption (c).} By making use of all the computations already carried
 out in the previous Example \ref{exm.existence}, we know that
 (see, precisely, \eqref{eq.Green12exmexistence} and
 \eqref{eq.gamma12exmexistence}) \medskip
 
 (i)\,\,$\|\GG_1(\hat{1})\|_\infty = \|\GG_2(\hat{1})\|_\infty
 = 1/2$; \medskip
 
 (ii)\,\,$\|\gamma_1\|_\infty = \|\gamma_2\|_\infty = 1$. \medskip
 
 \noindent As a consequence, by gathering together
 \eqref{eq.choicerhononexmexistence}, 
 \eqref{eq.f1assumptionanonexistence}, \eqref{eq.f2assumptionanonexistence},
 \eqref{h1assumptionbnonexistence}, \eqref{h2assumptionbnonexistence}
 and the above (i)-(ii), we are entitled to apply
 Theorem \ref{thmnonex}: for any $\lambda_1,\lambda_2,\eta_1,\eta_2\geq 0$ satisfying
  \begin{align*}
  \frac{\lambda_1}{2}+\frac{4\pi}{3}\,\eta_1 < 1 \qquad\text{and} \qquad
  2\lambda_2 + \eta_2 < 1,
   \end{align*}
   the BVP \eqref{eq.BVPnonexistence} possesses only the zero solution
   (notice that $\bldu \equiv 0$ trivially solves
   \eqref{eq.BVPnonexistence}).
\end{ex}


\end{document}